\newtheorem{thm}{Theorem}[section]
\newtheorem{lem}[thm]{Lemma}
\theoremstyle{definition}
 \theoremstyle{remark}
\newtheorem{rem}[thm]{Remark}
\numberwithin{equation}{section}
\begin{document} \title[]{Spectral properties of weighted composition operators on the Bloch and Dirichlet spaces}

\author[Eklund] {Ted Eklund}\address{Ted Eklund. Department of Mathematics, \AA bo Akademi University. FI-20500 \AA bo, Finland.
\emph{e}.mail: ted.eklund@abo.fi}
\author[Lindstr{\"o}m] {Mikael Lindstr\"om}\address{Mikael Lindstr{\"o}m. Department of Mathematics, \AA bo Akademi University. FI-20500 \AA bo, Finland. \emph{e}.mail: mikael.lindstrom@abo.fi} 
\author[Mleczko] {Pawe{\l} Mleczko}\address{Pawe{\l} Mleczko. Faculty of Mathematics and Computer Science, Adam Mickiewicz University in Pozna\'{n}, Umultowska 87, 61-614 Pozna\'{n}, Poland.
\emph{e}.mail: pml@amu.edu.pl}

\begin{abstract}
The spectra of invertible weighted composition operators $uC_\varphi$ on the Bloch and Dirichlet spaces are studied. In the Bloch case we obtain a complete description of the spectrum when $\varphi$ is a parabolic or elliptic automorphism of the unit disc. In the case of a hyperbolic automorphism $\varphi$, exact expressions for the spectral radii of invertible weighted composition operators acting on the Bloch and Dirichlet spaces are derived.
\end{abstract}

\maketitle

\section{Introduction}
   
The space of analytic functions on the open unit disk $\mathbb D$ in the complex plane $\mathbb{C}$ is denoted by $H(\mathbb D)$. Every analytic selfmap $\varphi \colon \mathbb D \to \mathbb D$ of the unit disc induces a \emph{composition operator} $C_{\varphi}f = f \circ \varphi$ on $H(\mathbb{D})$. These operators have been studied for many decades starting from the papers of Littlewood, Hardy and Riesz in the beginning of the 20th century. For general information of composition operators on classical spaces of analytic functions the reader is referred to the excellent monographs by Cowen and MacCluer \cite{4} and Shapiro \cite{11}. In recent years this well-recognized theory has received new stimulus from the more general situation of linear \emph{weighted composition operators} $uC_{\varphi}(f) = u(f \circ \varphi)$, where $u \in H(\mathbb{D})$. The main objective when studying the operators $uC_\varphi$ is to relate operator theoretic properties of $uC_\varphi$ to function theoretic properties of the inducing symbols $\varphi$ and $u$.

This paper is devoted to the study of spectral properties of invertible weighted composition operators acting on the Bloch and Dirichlet spaces, defined in the preliminaries section below. The main references are the papers \cite{3} by Chalendar, Gallardo-Guti\'{e}rrez and Partington, and \cite{2} by Hyv\"{a}rinen et al. In \cite{2} the spectrum of weighted composition operators with automorphic symbols was extensively studied on spaces of analytic functions satisfying certain general conditions introduced in \cite[Section 2.2]{2}. This class contains for example the weighted Bergman spaces and Hardy spaces. However, the Bloch and Dirichlet spaces are not in this class since the bounded analytic functions are not contained in the multipliers of these spaces. A new approach is thus needed, and found partly in \cite[Section 3]{3}, where the spectra of invertible weighted composition operators induced by parabolic and elliptic automorphisms on the Dirichlet space are completely described and the hyperbolic case is left as an open problem.

The paper is organized as follows. In Section \ref{28} we study the multipliers of the Bloch
space, and obtain results similar to those in \cite[Section 2]{3}. Section \ref{29} is devoted to the spectral theory of invertible weighted composition operators $uC_\varphi$ acting on the Bloch space. In particular, we give a description of the spectrum when $\varphi$ is a parabolic or elliptic automorphism of $\mathbb{D}$. In the case of hyperbolic $\varphi$, the spectral radius is computed and we obtain an inclusion of the spectrum in an annulus. Finally, in Section \ref{30} we improve the recent estimates \cite[Theorem 3.3]{3} by Chalendar, Gallardo-Guti\'{e}rrez and Partington of the spectrum of an invertible weighted composition operator on the Dirichlet space when $\varphi$ is a hyperbolic automorphism of $\mathbb{D}$.

\section{Preliminaries}

We begin by recalling some Banach spaces of analytic functions on the unit disc $\mathbb{D}$. The Bloch space $\mathcal{B}$ is the set of functions $f\in H(\mathbb{D})$ such that $\sup_{z \in\mathbb D} \big(1 - |z|^2\big)|f'(z)| < +\infty$, and is equipped with the norm
\[
||f||_{\mathcal B} \ = \ |f(0)| + \sup_{z \in\mathbb D} \big(1 - |z|^2\big)|f'(z)|,\quad f\in\mathcal{B}.
\]
The Dirichlet space $\mathcal D$ consists of functions $f \in H(\mathbb{D})$ such that $f^{\prime} \in A^2$, where
the Bergman space $A^2$ is the set of analytic functions on $\mathbb D$ such that
$$||f||^2_{A^2} \ = \ \int_{\mathbb D} |f(z)|^2  dA(z) < +\infty,$$
with normalized Lebesgue measure $dA(\cdot)$ on $\mathbb{D}$. The Dirichlet norm is defined as
\[
||f||^2 _{\mathcal D} \ = \ |f(0)|^2 + \int_{\mathbb D} |f'(z)|^2 dA(z),\quad f\in\mathcal{D}.
\]
Other spaces of analytic functions on the unit disc $\mathbb{D}$ used in this paper are the disc algebra $A(\mathbb{D})$, consisting of functions continuous on the closed unit disc, the weighted Banach
spaces of analytic functions 
$$H_{v_s}^{\infty} \ = \ \bigg\{f \in H(\mathbb D): ||f||_{H^\infty_{v_s}} \, = \, \sup_{z \in \mathbb D} v_s(z) |f(z)| < +\infty \bigg\},$$ 
where $0 < s < \infty$ and $v_s(z) = (1 - |z|^2)^s$ is the standard weight, and the space $H^{\infty}$ of bounded analytic functions on $\mathbb{D}$ with supremum norm $\|\cdot\|_\infty$. The spectrum and spectral radius of an operator $T: \mathcal{X} \rightarrow \mathcal{X}$ on a space $\mathcal{X}$  are denoted respectively 
by $\sigma_{\mathcal X}(T)$ and $r_{\mathcal X}(T)$.  A good reference for operator theory in function spaces is the monograph \cite{13} by Zhu.

When dealing with composition operators it is customary to denote the $n$-th iterate of a selfmap $\varphi$ of $\mathbb{D}$ by $\varphi_n$, that is
\begin{equation*}
\varphi_n := \underbrace{\varphi \circ \varphi \circ \dots \circ \varphi}_{n \text{ times}}
\end{equation*}
with $\varphi_{0}$ representing the identity map, and it is easy to check that 
\begin{equation*}
(uC_\varphi)^n f(z)=u(z)u(\varphi(z))\cdots u(\varphi_{n-1}(z))
f(\varphi_n(z)),\quad f\in H(\mathbb{D}),\ z\in\mathbb{D}.
\end{equation*}
This can also be stated as $(u C_\varphi)^n = u_{(n)} C_{\varphi_n}$, where $u_{(0)}:=1$ and
\begin{equation*}
u_{(n)} := \prod_{j=0}^{n-1} u\circ \varphi_j\in H(\mathbb{D}), \quad n \in \mathbb{N}. 
\end{equation*}
It turns out that the spectral analysis of invertible weighted composition operators
$uC_\varphi$ strongly depends on the type of the (necessarily) automorphic symbol
$\varphi$. Recall that a nontrivial automorphism $\varphi$ of $\mathbb{D}$ is called \emph{elliptic} if it has a unique fixed point in $\mathbb{D}$, \emph{parabolic} if $\varphi$ has Denjoy--Wolf fixed point $a$ in
$\partial\mathbb{D}$ with $\varphi'(a)=1$, and \emph{hyperbolic} if $\varphi$ has Denjoy--Wolf fixed point $a\in\partial\mathbb{D}$ (the so-called
\emph{attractive fixed point}) with $0 < \varphi'(a) < 1$ and a
\emph{repulsive fixed point} $b\in\partial\mathbb{D}$ with
$\varphi'(b)=1/\varphi'(a)$ (see \cite[Section 2.3.]{4}). When computing the spectrum, we will make use of the formula
\begin{equation}\label{11}
  \lim_{n \to  \infty}{\big(1-|\varphi_{n}(0)|\big)^{\frac{1}{n}}}  =  \varphi^{\prime}(a),
\end{equation}
which is valid for parabolic and hyperbolic automorphisms $\varphi$ of $\mathbb{D}$ with Denjoy--Wolf point $a$, see~\cite[pp.~251--252]{4}.

\section{Multiplier spaces}\label{28}

In this section we consider the multiplier spaces 
\begin{equation*}
\mathcal{M}(\mathcal{X}) := \{u \in H(\mathbb{D}) : M_u : \mathcal{X} \rightarrow \mathcal{X} \text{ is bounded}\} \subset \mathcal{X}
\end{equation*}
and
\begin{equation*}
\mathcal{M}(\mathcal{X},\varphi) := \{u \in H(\mathbb{D}) : uC_{\varphi} : \mathcal{X} \rightarrow \mathcal{X} \text{ is bounded}\} \subset \mathcal{X},
\end{equation*}
where $\mathcal{X}$ is either the Bloch space $\mathcal{B}$ or the Dirichlet space $\mathcal{D}$, and the multiplication operator is defined in the obvious way $M_u f = uf$. The main results of this section are Theorem \ref{31}, where we characterize those $\varphi$ for which $\mathcal{M}(\mathcal{B},\varphi) = \mathcal{B}$, and Theorem~\ref{2} where we show that $\mathcal{M}(\mathcal{B},\varphi) = \mathcal{M}(\mathcal{B})$ whenever $\varphi$ is a finite Blaschke product. The Dirichlet space versions of the mentioned results are given in \cite[Theorems 2.2--2.3]{3}. 

The multiplier space $\mathcal{M}(\mathcal{D})$ was characterized by Stegenga in \cite{12} as the set of functions $u \in H^{\infty}$ such that the multiplication operator $M_{u^{\prime}} : \mathcal{D} \rightarrow A^2$ is bounded ($|u^{\prime}(z)|^2dA(z)$ being a Carleson measure for $\mathcal{D}$). For the Bloch space it is known from \cite{6} that a weighted composition operator $uC_{\varphi} : \mathcal{B} \rightarrow \mathcal{B}$ is bounded if and only if the following two conditions hold: 
\begin{equation}\label{24}
\sup_{z \in \mathbb{D}}\,\big(1-|z|^2\big)|u^{\prime}(z)|\log\frac{e}{1-|\varphi(z)|^2} \ < \ +\infty
\end{equation}
\begin{equation}\label{25}
\sup_{z \in \mathbb{D}}\,\frac{1-|z|^2}{1-|\varphi(z)|^2}|u(z)\varphi^{\prime}(z)| \ < \ +\infty.
\end{equation}
From this follows that $u \in \mathcal{M}(\mathcal{B})$ if and only if
\begin{equation}\label{26}
\sup_{z \in \mathbb{D}}\,\big(1-|z|^2\big)|u^{\prime}(z)|\log\frac{e}{1-|z|^2} \ < \ +\infty 
\end{equation}
\begin{equation}\label{27}
u \in H^{\infty}.
\end{equation}
Condition (\ref{26}) can also be related to a multiplication operator in a similar fashion as in the Dirichlet case:
\begin{lem}\label{20}
If the function $u : \mathbb{D} \rightarrow \mathbb{C}$ is analytic, then the multiplication operator $M_{u^{\prime}} : \mathcal{B} \rightarrow H_{v_1}^{\infty}$ is bounded if and only if condition \textnormal{(\ref{26})} holds. 
\end{lem}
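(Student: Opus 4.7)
The plan is to establish both directions by standard Bloch-space techniques: a growth estimate for Bloch functions gives sufficiency, and testing $M_{u'}$ on explicit Bloch functions gives necessity.

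For sufficiency, assume (\ref{26}) holds with supremum $C$. Recall the well-known pointwise growth estimate for $f\in\mathcal{B}$: integrating $|f'|$ along the radius from $0$ to $z$, one obtains $|f(z)| \leq |f(0)| + \tfrac{1}{2}\|f\|_{\mathcal{B}}\log\frac{1+|z|}{1-|z|}$, and hence there is an absolute constant $K$ with $|f(z)| \leq K\|f\|_{\mathcal{B}}\log\frac{e}{1-|z|^2}$ for all $z\in\mathbb{D}$. Multiplying by $(1-|z|^2)|u'(z)|$ and invoking (\ref{26}) gives
\[
(1-|z|^2)|u'(z)f(z)| \ \leq \ KC\|f\|_{\mathcal{B}},\qquad z\in\mathbb{D},
\]
so $M_{u'}f\in H^\infty_{v_1}$ with norm at most $KC\|f\|_{\mathcal{B}}$.

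For necessity, assume $M_{u'}\colon\mathcal{B}\to H^\infty_{v_1}$ is bounded with norm $N$. First apply $M_{u'}$ to the constant function $1\in\mathcal{B}$ (which has Bloch norm $1$) to get $\sup_{z\in\mathbb{D}}(1-|z|^2)|u'(z)| \leq N$. Next, for each $w\in\mathbb{D}$ consider the test function $f_w(z) = \log\frac{1}{1-\bar{w}z}$. Since $f_w(0)=0$ and $(1-|z|^2)|f_w'(z)| = \frac{(1-|z|^2)|w|}{|1-\bar{w}z|} \leq 1+|z| \leq 2$, we have $\|f_w\|_{\mathcal{B}}\leq 2$. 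Evaluating $M_{u'}f_w$ at $z=w$, where $|f_w(w)|=\log\frac{1}{1-|w|^2}$, yields
\[
(1-|w|^2)|u'(w)|\log\frac{1}{1-|w|^2} \ \leq \ \|M_{u'}f_w\|_{H^\infty_{v_1}} \ \leq \ 2N.
\]
Adding this to the bound obtained from the constant function gives
\[
(1-|w|^2)|u'(w)|\log\frac{e}{1-|w|^2} \ = \ (1-|w|^2)|u'(w)|\Bigl(1+\log\tfrac{1}{1-|w|^2}\Bigr) \ \leq \ 3N,
\]
establishing (\ref{26}).

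There is no genuine obstacle here; the only point requiring a small amount of care is handling the $+e$ inside the logarithm of (\ref{26}), which is why the constant test function is used to absorb the $1$ appearing in $\log\frac{e}{1-|w|^2}=1+\log\frac{1}{1-|w|^2}$.
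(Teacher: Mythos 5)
Your proof is correct and follows essentially the same route as the paper: the standard logarithmic growth estimate for Bloch functions gives the sufficiency, and testing $M_{u'}$ on the functions $\log\frac{1}{1-\bar{w}z}$ (the paper uses $\log\frac{e}{1-\bar{a}z}$ directly, evaluated at $a=z$) gives the necessity. Your extra use of the constant function to absorb the $\log e = 1$ term is a cosmetic variation, not a different argument.
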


\begin{proof}
If condition \textnormal{(\ref{26})} holds then
\begin{align*}
\|M_{u^{\prime}}\|_{\mathcal{B} \rightarrow H_{v_1}^{\infty}} \ &= \ \sup_{\|f\|_{\mathcal{B}}=1}\|M_{u^{\prime}}f\|_{H_{v_1}^{\infty}} \ = \ \sup_{\|f\|_{\mathcal{B}}=1}\sup_{z \in \mathbb{D}}\,\big(1-|z|^2\big)|u^{\prime}(z)f(z)| \\
&\leq \ \sup_{\|f\|_{\mathcal{B}}=1}\sup_{z \in \mathbb{D}}\,\big(1-|z|^2\big)\big|u^{\prime}(z)\big|\alpha\|f\|_{\mathcal{B}}\log\frac{e}{1-|z|^2} \\
&= \ \alpha\cdot\sup_{z \in \mathbb{D}}\,\big(1-|z|^2\big)|u^{\prime}(z)|\log\frac{e}{1-|z|^2} \ <  +\infty,  
\end{align*}
where we used that every Bloch function $f$ satisf{}ies
\begin{equation}\label{8}
\sup_{z \in \mathbb{D}}\,\frac{|f(z)|}{\log\frac{e}{1-|z|^2}} \ \leq \ \alpha\|f\|_{\mathcal{B}}
\end{equation}
for some positive constant $\alpha$ independent of $f$. On the other hand, if the operator $M_{u^{\prime}} : \mathcal{B} \rightarrow H_{v_1}^{\infty}$ is bounded then there is a constant $c > 0$ such that for every $f \in \mathcal{B}$
\begin{equation*}
\|M_{u^{\prime}}(f)\|_{H_{v_1}^{\infty}} \ \leq \ c\|f\|_{\mathcal{B}}.
\end{equation*} 
When this is applied to the Bloch functions $f_a(z):= \log\frac{e}{1-\bar{a}z}$ for $a \in \mathbb{D}$, we obtain
\begin{equation*}
\big(1-|z|^2\big)|u^{\prime}(z)|\Big|\log\frac{e}{1-\bar{a}z}\Big| \ \leq \ \|M_{u^{\prime}}(f_a)\|_{H_{v_1}^{\infty}} \ \leq \ c\|f_a\|_{\mathcal{B}} \ \leq \ 2c 
\end{equation*} 
for every $z,a \in \mathbb{D}$, since $\|f_a\|_{\mathcal{B}}  \leq  2$ for every $a \in \mathbb{D}$. Now choose $a = z$ and take supremum over $z \in \mathbb{D}$ to get (\ref{26}).
\end{proof}

\begin{thm}\label{31}
Let $\varphi$ be an analytic selfmap of $\mathbb{D}$. Then $\mathcal{M}(\mathcal{B},\varphi) = \mathcal{B}$ if and only if
\begin{enumerate}
\item $\|\varphi\|_{\infty} < 1$ and
\item $\varphi \in \mathcal{M}(\mathcal{B})$.
\end{enumerate}
\end{thm}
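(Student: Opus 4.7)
The theorem splits into the two implications.

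For $(\Leftarrow)$, assume (1) and (2) and fix any $u \in \mathcal{B}$. I verify the boundedness criteria \eqref{24} and \eqref{25}. Condition \eqref{24} holds since $(1-|z|^2)|u'(z)| \leq \|u\|_\mathcal{B}$ and, by (1), $\log\frac{e}{1-|\varphi(z)|^2} \leq \log\frac{e}{1-\|\varphi\|_\infty^2}$. Condition \eqref{25} holds because $1/(1-|\varphi(z)|^2)$ is bounded by (1) and, via the Bloch growth estimate \eqref{8},
\[
(1-|z|^2)|u(z)\varphi'(z)| \leq \alpha\|u\|_\mathcal{B}(1-|z|^2)|\varphi'(z)|\log\frac{e}{1-|z|^2},
\]
which is finite by (2) together with \eqref{26}.

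For $(\Rightarrow)$ the plan is to extract a uniform operator-norm bound via the open mapping theorem and then evaluate on a suitable test family. Equip $\mathcal{M}(\mathcal{B},\varphi)$ with the norm $\|u\|_{*} := \|uC_\varphi\|_{\mathcal{B}\to\mathcal{B}}$. Evaluating $uC_\varphi$ at $\mathbf{1}$ gives $u = uC_\varphi(\mathbf{1})$, so $\|u\|_\mathcal{B} \leq \|u\|_{*}$ and the inclusion into $\mathcal{B}$ is continuous; the space $(\mathcal{M}(\mathcal{B},\varphi),\|\cdot\|_{*})$ is complete by a standard closed-graph argument using pointwise evaluation (if $u_n C_\varphi \to S$ in $\BX$, then $u_n = u_n C_\varphi(\mathbf{1}) \to S\mathbf{1} =: u$ in $\mathcal{B}$, and pointwise evaluation forces $S = uC_\varphi$). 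Since the inclusion is surjective by hypothesis, the open mapping theorem yields $K > 0$ with $\|uC_\varphi\|_{\mathcal{B}\to\mathcal{B}} \leq K\|u\|_\mathcal{B}$ for every $u \in \mathcal{B}$.

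Apply this to the test family $u_a(z) := \log\frac{e}{1-\bar{a}z}$, $a \in \mathbb{D}$, whose norms $\|u_a\|_\mathcal{B}$ and derivative suprema $\sup_z(1-|z|^2)|u_a'(z)|$ are uniformly bounded by an absolute constant $M$; hence $\|u_aC_\varphi\|_{\mathcal{B}\to\mathcal{B}} \leq KM$ uniformly. To obtain (1), feed $u_aC_\varphi$ the Bloch function $g_a(z) := \log\frac{e}{1-\bar{b}z}$ with $b := \varphi(a)/|\varphi(a)|$ (the case $\varphi(a)=0$ being trivial), so that $|b|=1$, $\|g_a\|_\mathcal{B}\leq M$, and $\bar{b}\varphi(a) = |\varphi(a)|$. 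Applying \eqref{8} at $z = a$ to $u_aC_\varphi g_a \in \mathcal{B}$ gives
\[
\log\frac{e}{1-|a|^2}\cdot\log\frac{e}{1-|\varphi(a)|} = |(u_aC_\varphi g_a)(a)| \leq \alpha\,\|u_aC_\varphi g_a\|_\mathcal{B}\log\frac{e}{1-|a|^2},
\]
and cancelling the common positive factor yields $\log\frac{e}{1-|\varphi(a)|} \leq \alpha K M^2 =: c$ uniformly in $a$, whence $\|\varphi\|_\infty \leq 1 - e^{1-c} < 1$, which is (1). For (2), feed $u_aC_\varphi$ instead the Bloch function $f(z) := z$, obtaining $u_a\varphi \in \mathcal{B}$ with $\sup_z(1-|z|^2)|(u_a\varphi)'(z)| \leq KM$. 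Decomposing $(u_a\varphi)' = u_a'\varphi + u_a\varphi'$ and using $\|\varphi\|_\infty \leq 1$ together with $\sup_z(1-|z|^2)|u_a'(z)| \leq M$, I obtain $(1-|z|^2)|u_a(z)\varphi'(z)| \leq KM + M$. Evaluating at $z = a$, where $u_a(a) = \log\frac{e}{1-|a|^2}$, delivers \eqref{26} for $\varphi$; combined with the automatic \eqref{27}, this is (2).

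The main care-points I expect are the uniform Bloch-norm bounds on the test families $u_a$ and $g_a$ (these are standard, in the spirit of the proof of Lemma \ref{20}) and the cancellation step that isolates $\log\frac{e}{1-|\varphi(a)|}$ in the argument for (1); both are routine once written out.
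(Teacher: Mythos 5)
Your proof is correct, and the sufficiency direction coincides with the paper's. The necessity direction, however, follows a genuinely different route. The paper argues qualitatively: it first deduces from the hypothesis, via the Closed Graph Theorem applied to each multiplication operator $M_{f\circ\varphi}$, that $C_\varphi:\mathcal{B}\to H^\infty$ is bounded, and then rules out $\|\varphi\|_\infty=1$ by contradiction, using an unbounded Bloch function together with rotations $f_n(z)=f(e^{i\theta_n}z)$ to produce a norm-bounded sequence on which $C_\varphi$ is unbounded in $H^\infty$; for condition (2) it invokes the boundedness of $M_{\varphi'}:\mathcal{B}\to H^{\infty}_{v_1}$ and Lemma \ref{20}. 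You instead extract a single uniform constant $K$ with $\|uC_\varphi\|_{\mathcal{B}\to\mathcal{B}}\le K\|u\|_{\mathcal{B}}$ via the Open Mapping Theorem (your completeness argument for $(\mathcal{M}(\mathcal{B},\varphi),\|\cdot\|_*)$ is sound, since point evaluations are bounded on $\mathcal{B}$ and $u=uC_\varphi\mathbf{1}$), and then test on the explicit logarithmic family $u_a$, obtaining both conclusions \emph{quantitatively}: an explicit bound $\|\varphi\|_\infty\le 1-e^{1-c}$ and the estimate (\ref{26}) for $\varphi$ with constant $KM+M$. Your approach buys effective constants and avoids the contradiction/rotation device entirely, at the price of setting up the auxiliary Banach space; the paper's approach is softer and needs no uniform bound, but each of its two steps requires a separate ad hoc argument. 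The small care-points you flag (uniform Bloch bounds on $u_a$ and $g_a$, positivity of the cancelled factor $\log\frac{e}{1-|a|^2}\ge 1$, and the harmless case $\varphi(a)=0$) all check out.
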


\begin{proof}
Assume first that (1) and (2) hold and choose $u \in \mathcal{B}$. Then
\begin{equation*}
\sup_{z \in \mathbb{D}}\,(1-|z|^2)|u^{\prime}(z)|\log\frac{e}{1-|\varphi(z)|^2} \ \leq \ \|u\|_{\mathcal{B}}\log\frac{e}{1-\|\varphi\|_{\infty}^2} \ < \ +\infty,
\end{equation*}
so (\ref{24}) holds. We also have that 
\begin{align*}
&\ \ \ \ \ \ \ \ \ \ \ \ \ \ \ \ \ \ \ \ \ \ \ \ \ \sup_{z \in \mathbb{D}}\,\frac{1-|z|^2}{1-|\varphi(z)|^2}|u(z)\varphi^{\prime}(z)| \\
&\leq \ \frac{1}{1-\|\varphi\|_{\infty}^2}\bigg(\sup_{z \in \mathbb{D}}\,(1-|z|^2)|\varphi^{\prime}(z)|\log\frac{e}{1-|z|^2}\bigg)\cdot\bigg(\sup_{z \in \mathbb{D}}\,\frac{|u(z)|}{\log\frac{e}{1-|z|^2}}\bigg),
\end{align*}
which is finite because $u$ is a Bloch function (see (\ref{8})) and $\varphi \in \mathcal{M}(\mathcal{B})$ (replace $u$ with $\varphi$ in (\ref{26})). Thus (\ref{25}) also holds, and so $u \in \mathcal{M}(\mathcal{B},\varphi)$, which shows that $\mathcal{M}(\mathcal{B},\varphi) = \mathcal{B}$ (the inclusion $\mathcal{M}(\mathcal{B},\varphi) \subset \mathcal{B}$ being trivial). 

Assume on the other hand that $\mathcal{M}(\mathcal{B},\varphi) = \mathcal{B}$, so that $uC_{\varphi}: \mathcal{B} \rightarrow \mathcal{B}$ is bounded for every $u \in \mathcal{B}$. This assumption ensures that if $f \in \mathcal{B}$ then $(f\circ\varphi) \cdot u \in \mathcal{B}$ for every $u \in \mathcal{B}$, which means that the multiplication operator $M_{f\circ \varphi}: \mathcal{B} \rightarrow \mathcal{B}$ is well defined and hence bounded by the Closed Graph Theorem. Thus $C_{\varphi}f = f\circ\varphi \in \mathcal{M}(\mathcal{B}) \subset H^{\infty}$, so that the composition operator $C_{\varphi}: \mathcal{B} \rightarrow H^{\infty}$ is well defined and bounded. Assume to reach a contradiction that $\|\varphi\|_{\infty} = 1$. Choose a function $f \in \mathcal{B}\setminus H^{\infty}$ such that $\|f\|_{\mathcal{B}}=1$. Since $f$ is unbounded there is a sequence $\{\omega_n\}_{n=1}^\infty\subset\mathbb{D}$, $|\omega_n|\rightarrow 1^-$, such that $|f(\omega_n)|> n$ for every $n\in \mathbb{N}$. Since $\|\varphi\|_{\infty} = 1$, there is a sequence $\{z_n\}_{n=1}^\infty\subset\mathbb{D}$ such that $|\varphi(z_n)| = |\omega_n|$ for $n$ large enough, say $n \geq n_0$. Now choose the sequence $\{\theta_n\}_{n=n_0}^\infty$ so that $\omega_n=e^{i\theta_n}\varphi(z_n)$ and define $f_n(z):= f(e^{i\theta_n}z)$. Then $\|f_n\|_{\mathcal{B}} = \|f\|_{\mathcal{B}} = 1$ for every $n$, but $|C_{\varphi}f_n(z_n)| = |f_n(\varphi(z_n))| = |f(\omega_n)| > n$ so that $\|C_{\varphi}f_n\|_{\infty} > n$. This contradicts the boundedness of $C_{\varphi}:\mathcal{B}\rightarrow H^{\infty}$, so we must have $\|\varphi\|_{\infty} < 1$. 

It remains to show that $\varphi \in \mathcal{M}(\mathcal{B})$. The boundedness of $uC_{\varphi}: \mathcal{B} \rightarrow \mathcal{B}$ for every $u \in \mathcal{B}$ and (\ref{25}) imply that
\begin{equation*}
\sup_{z \in \mathbb{D}}\,(1-|z|^2)|u(z)\varphi^{\prime}(z)| \ < \ +\infty
\end{equation*}
for every $u \in \mathcal{B}$. The operator $M_{\varphi^{\prime}} : \mathcal{B} \rightarrow H_{v_1}^{\infty}$ is hence well defined and bounded. According to Lemma \ref{20} this is equivalent to
\begin{equation*}
\sup_{z \in \mathbb{D}}\,(1-|z|^2)|\varphi^{\prime}(z)|\log\frac{e}{1-|z|^2} \ < \ +\infty,
\end{equation*} 
so $\varphi \in \mathcal{M}(\mathcal{B})$ and the proof is complete.
\end{proof}

\begin{thm}\label{2}
Assume that $\varphi$ is a finite Blaschke product. Then $\mathcal{M}(\mathcal{B},\varphi) = \mathcal{M}(\mathcal{B})$.
\end{thm}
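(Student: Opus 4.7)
The plan is to prove both inclusions $\mathcal{M}(\mathcal{B}) \subset \mathcal{M}(\mathcal{B},\varphi)$ and $\mathcal{M}(\mathcal{B},\varphi) \subset \mathcal{M}(\mathcal{B})$ by comparing the characterizations (\ref{24})--(\ref{25}) and (\ref{26})--(\ref{27}), using the strong boundary regularity of a finite Blaschke product. I will rely on three facts about a finite Blaschke product $\varphi$ with zeros $a_1,\ldots,a_N$: (i) $\varphi$ extends to a rational function on a neighborhood of $\overline{\mathbb{D}}$ mapping $\partial\mathbb{D}$ onto $\partial\mathbb{D}$, so $\varphi,\varphi' \in H^{\infty}$; (ii) there exist constants $0 < c \le C < \infty$ with $c(1-|z|^2) \le 1-|\varphi(z)|^2 \le C(1-|z|^2)$ for every $z \in \mathbb{D}$, since this ratio extends continuously to $\overline{\mathbb{D}}$ with positive boundary values $|\varphi'(\zeta)|$ (Julia--Carath\'eodory) and is thus bounded above and below by compactness; (iii) $\varphi'$ has no zeros on $\partial\mathbb{D}$ (a direct computation gives $|\varphi'(\zeta)| = \sum_{k}(1-|a_{k}|^{2})/|\zeta - a_{k}|^{2} > 0$) and consequently only finitely many zeros in $\mathbb{D}$.

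For the inclusion $\mathcal{M}(\mathcal{B}) \subset \mathcal{M}(\mathcal{B},\varphi)$, let $u \in \mathcal{M}(\mathcal{B})$, so $u \in H^{\infty}$ satisfies (\ref{26}), and $u \in \mathcal{B}$ gives a finite Bloch seminorm $M := \sup_{z}(1-|z|^{2})|u'(z)|$. From (ii) one obtains $\log\frac{e}{1-|\varphi(z)|^{2}} \le \log\frac{e}{1-|z|^{2}} + \log\frac{1}{c}$, so (\ref{26}) together with the bound $M$ yields (\ref{24}). Condition (\ref{25}) is immediate from $\frac{1-|z|^{2}}{1-|\varphi(z)|^{2}} \le 1/c$ combined with $u \in H^{\infty}$ and $\varphi' \in H^{\infty}$. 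Hence $u \in \mathcal{M}(\mathcal{B},\varphi)$.

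For the reverse inclusion, let $u \in \mathcal{M}(\mathcal{B},\varphi)$; this gives (\ref{24})--(\ref{25}) and also $u \in \mathcal{B}$. Condition (\ref{26}) follows symmetrically: (ii) yields $\log\frac{e}{1-|z|^{2}} \le \log\frac{e}{1-|\varphi(z)|^{2}} + \log C$, so (\ref{24}) together with the Bloch bound on $(1-|z|^{2})|u'(z)|$ implies (\ref{26}). The main obstacle is verifying that $u \in H^{\infty}$. From (\ref{25}) and $\frac{1-|z|^{2}}{1-|\varphi(z)|^{2}} \ge 1/C$ one obtains only $u\varphi' \in H^{\infty}$, which alone does not give $u \in H^{\infty}$ because $\varphi'$ may vanish inside $\mathbb{D}$. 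The resolution uses (iii): let $z_{1},\ldots,z_{m}$ be the zeros of $\varphi'$ in $\mathbb{D}$, choose $\eps > 0$ so that the closed disks $\overline{B(z_{k},\eps)}$ are pairwise disjoint and contained in $\mathbb{D}$, and note that on the compact set $\overline{\mathbb{D}} \setminus \bigcup_{k} B(z_{k},\eps)$ the continuous function $|\varphi'|$ is bounded below by some $\delta > 0$, so $|u(z)| \le \|u\varphi'\|_{\infty}/\delta$ there; meanwhile $u$ is bounded on each $\overline{B(z_{k},\eps)}$ by analyticity. Patching these bounds delivers $u \in H^{\infty}$ and completes the proof.
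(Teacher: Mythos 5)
Your proof is correct and follows essentially the same route as the paper: the reverse inclusion hinges on the two-sided comparability of $1-|\varphi(z)|^2$ with $1-|z|^2$ (the paper cites Matache's Lemma~1 for the upper bound where you derive it directly) and on $\varphi'$ being zero-free on $\partial\mathbb{D}$ with only finitely many interior zeros, exactly as in the paper. The only cosmetic difference is in the easy inclusion, where the paper simply factors $uC_{\varphi}=M_uC_{\varphi}$ and invokes boundedness of $C_{\varphi}$ on $\mathcal{B}$ instead of re-verifying conditions (\ref{24})--(\ref{25}).
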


\begin{proof}
If $u \in \mathcal{M}(\mathcal{B})$ then $M_u : \mathcal{B} \rightarrow \mathcal{B}$ is bounded and hence $uC_{\varphi} = M_uC_{\varphi} : \mathcal{B} \rightarrow \mathcal{B}$ is bounded, so that $u \in \mathcal{M}(\mathcal{B},\varphi)$ and $\mathcal{M}(\mathcal{B}) \subset \mathcal{M}(\mathcal{B},\varphi)$. Assume on the other hand that $u \in \mathcal{M}(\mathcal{B},\varphi)$, so that $uC_{\varphi} : \mathcal{B} \rightarrow \mathcal{B}$ is bounded and $u \in \mathcal{B}$. Since $\varphi$ is a finite Blaschke product we get from Lemma 1 in \cite{5} that
\begin{equation*}
\frac{1-|\varphi(z)|^2}{1-|z|^2} \ \leq \ \sum_{j=1}^{N}{\frac{1+|\varphi_j(0)|}{1-|\varphi_j(0)|}} =: K,
\end{equation*} 
where $N$ is the degree of the Blaschke product. Hence
\begin{align*}
&\ \ \ \ \ \ \ \ \ \ \ \ \ \ \ \ \ \ \sup_{z \in \mathbb{D}}\,(1-|z|^2)|u^{\prime}(z)|\log\frac{e}{1-|z|^2} \\
&\leq \ \sup_{z \in \mathbb{D}}\,(1-|z|^2)|u^{\prime}(z)|\log\frac{e}{1-|\varphi(z)|^2} \ + \ \log K \cdot \|u\|_{\mathcal{B}} \ < \ +\infty,
\end{align*}
which is (\ref{26}). It remains to show that $u \in H^{\infty}$, so let $S$ be the finite supremum in (\ref{25}). For every $z \in \mathbb{D}$ for which the derivative of $\varphi$ is nonzero we have that
\begin{equation*}
|u(z)| \ \leq \ S\cdot\frac{1-|\varphi(z)|^2}{1-|z|^2}\cdot\frac{1}{|\varphi^{\prime}(z)|} \ \leq \ SK \cdot\frac{1}{|\varphi^{\prime}(z)|}.
\end{equation*} 
Since $\varphi$ is a finite Blaschke product, the derivative $\varphi^{\prime}$ is analytic in an open neighborhood of the closed unit disc, is nonzero on the unit circle $|z|=1$ and has only a finite number of zeros elsewhere. This implies that we can choose $\delta > 0$ so that $m_{\delta} := \min_{\delta \leq |z| \leq 1}\,|\varphi^{\prime}(z)| > 0$, which shows that $u$ is bounded by $SK\frac{1}{m_{\delta}}$ on $\delta \leq |z| < 1$ and hence bounded on $\mathbb{D}$. Thus $u \in \mathcal{M}(\mathcal{B})$, so that $\mathcal{M}(\mathcal{B},\varphi) \subset \mathcal{M}(\mathcal{B})$, and hence $\mathcal{M}(\mathcal{B},\varphi) = \mathcal{M}(\mathcal{B})$.    
\end{proof}

In the next sections we will study the spectrum of \emph{invertible} weighted composition operators on the Bloch and Dirichlet spaces. The invertibility of weighted composition operators has been characterized on various spaces of functions by many authors, see for example \cite{1,8,2}. The following result is a consequence of \cite[Corollary
2.3]{1}.

\begin{thm}\label{3}
Assume that $\mathcal{X}$ is either the Bloch space $\mathcal{B}$ or the Dirichlet space $\mathcal{D}$, and let $uC_{\varphi} : \mathcal{X} \rightarrow \mathcal{X}$ be a bounded weighted composition operator on $\mathcal{X}$. Then $uC_{\varphi}$ is invertible on $\mathcal{X}$ if and only if $u \in \mathcal{M}(\mathcal{X})$, $u$ is bounded away from zero on $\mathbb{D}$ and $\varphi$ is an automorphism of $\mathbb{D}$. In such a case the inverse operator of $uC_{\varphi}: \mathcal{X} \rightarrow \mathcal{X}$ is also a weighted composition operator, given by
\begin{equation*}
\big(uC_{\varphi}\big)^{-1} \ = \ \frac{1}{u\circ\varphi^{-1}}C_{\varphi^{-1}}.
\end{equation*}
\end{thm}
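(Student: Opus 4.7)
The plan is to split the proof into sufficiency (the ``if'' direction together with the explicit inverse formula) and necessity (the ``only if'' direction).

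For necessity, I would invoke \cite[Corollary 2.3]{1}, which characterizes the invertibility of weighted composition operators on a broad class of Banach spaces of analytic functions that includes both $\mathcal{B}$ and $\mathcal{D}$, and which yields directly that $\varphi$ must be an automorphism of $\mathbb{D}$, that $u$ must lie in $\mathcal{M}(\mathcal{X})$, and that $u$ must be bounded away from zero on $\mathbb{D}$. The actual work of the proof therefore lies in the sufficiency part and in exhibiting the inverse.

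For sufficiency, assume $u \in \mathcal{M}(\mathcal{X})$ is bounded away from zero by some $\delta > 0$ and that $\varphi$ is an automorphism of $\mathbb{D}$. Then $\varphi^{-1}$ is also an automorphism, and it is standard that $C_{\varphi^{-1}}$ is bounded on both $\mathcal{B}$ and $\mathcal{D}$. Next I would verify that $1/u \in \mathcal{M}(\mathcal{X})$. In the Bloch case this follows from the characterization (\ref{26})--(\ref{27}): clearly $1/u \in H^{\infty}$, and
\begin{equation*}
\big(1-|z|^2\big)\big|(1/u)'(z)\big|\log\frac{e}{1-|z|^2} \ = \ \frac{(1-|z|^2)|u'(z)|}{|u(z)|^2}\log\frac{e}{1-|z|^2} \ \leq \ \frac{1}{\delta^2}\,(1-|z|^2)|u'(z)|\log\frac{e}{1-|z|^2},
\end{equation*}
which is bounded in $z$ since $u \in \mathcal{M}(\mathcal{B})$. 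The Dirichlet analogue follows by a similar estimate from Stegenga's Carleson measure characterization of $\mathcal{M}(\mathcal{D})$. Consequently $M_{1/u}$ is bounded on $\mathcal{X}$, and so is the candidate inverse
\begin{equation*}
T \ := \ \frac{1}{u\circ\varphi^{-1}}\,C_{\varphi^{-1}} \ = \ C_{\varphi^{-1}}\,M_{1/u},
\end{equation*}
the second equality being a pointwise identity. A direct computation shows that the weight cancels in both compositions, yielding $T(uC_{\varphi})f(z) = \frac{1}{u(\varphi^{-1}(z))}\,u(\varphi^{-1}(z))\,f(z) = f(z)$ and $(uC_{\varphi})Tf(z) = \frac{u(z)}{u(z)}\,f(z) = f(z)$ for every $f \in \mathcal{X}$, so $T = (uC_{\varphi})^{-1}$.

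The main obstacle I anticipate is confirming that $1/u \in \mathcal{M}(\mathcal{X})$, especially for $\mathcal{D}$ where the Carleson measure condition must be reverified under the reciprocal; the remainder of the argument reduces to the boundedness of composition operators with automorphic symbols together with the routine algebraic identities that make $T$ a two-sided inverse of $uC_{\varphi}$.
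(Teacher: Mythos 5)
Your proposal is correct and takes essentially the same route as the paper, which deduces the entire statement directly from \cite[Corollary 2.3]{1} without further argument. Your additional direct verification of sufficiency — in particular that $1/u\in\mathcal{M}(\mathcal{X})$, which in the Bloch case is exactly the content of Lemma \ref{18} — is sound but goes beyond what the paper itself records.
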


\section{Spectra on the Bloch space}\label{29}

In this section we study the spectrum of invertible weighted composition operators on the Bloch space. The investigation is divided into three cases, to cover parabolic, hyperbolic and elliptic automorphisms. Our approach is based on the papers \cite{3} and \cite{2}, but new ideas are still needed. The following lemma will be useful in the sequel.

\begin{lem}\label{9}
If $\varphi$ is an automorphism of $\mathbb{D}$, then $r_{\mathcal{B}}(C_{\varphi}) = 1$.
\end{lem}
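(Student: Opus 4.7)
The plan is to compute the spectral radius via the formula $r_{\mathcal{B}}(C_\varphi)=\lim_n \|C_\varphi^n\|_{\mathcal{B}}^{1/n}=\lim_n \|C_{\varphi_n}\|_{\mathcal{B}}^{1/n}$, bounding each norm from above using the M\"obius invariance of the Bloch seminorm and getting a matching lower bound from an eigenvalue.

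For the upper bound, I would first record the Schwarz--Pick identity $(1-|z|^2)|\varphi'(z)|=1-|\varphi(z)|^2$ valid for automorphisms $\varphi$, from which the Bloch seminorm $|f|_{\mathcal{B},0}:=\sup_z(1-|z|^2)|f'(z)|$ is M\"obius invariant, that is $|f\circ\varphi_n|_{\mathcal{B},0}=|f|_{\mathcal{B},0}$. Combining this with the standard growth estimate for Bloch functions, $|f(w)-f(0)|\le \tfrac12\log\tfrac{1+|w|}{1-|w|}\,|f|_{\mathcal{B},0}$, applied at $w=\varphi_n(0)$, yields
\begin{equation*}
\|C_\varphi^n f\|_{\mathcal{B}} = |f(\varphi_n(0))|+|f\circ\varphi_n|_{\mathcal{B},0} \le \left(1+\tfrac12\log\tfrac{1+|\varphi_n(0)|}{1-|\varphi_n(0)|}\right)\|f\|_{\mathcal{B}},
\end{equation*}
so that $\|C_\varphi^n\|_{\mathcal{B}}\le 1+\tfrac12\log\tfrac{1+|\varphi_n(0)|}{1-|\varphi_n(0)|}$. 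It now suffices to verify that the $n$-th root of the right-hand side tends to $1$.

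This splits into cases according to the type of $\varphi$. If $\varphi$ is parabolic or hyperbolic, formula (\ref{11}) gives $(1-|\varphi_n(0)|)^{1/n}\to\varphi'(a)\in(0,1]$, so $-\log(1-|\varphi_n(0)|)=O(n)$ and hence the upper bound is $(1+O(n))^{1/n}\to 1$. If $\varphi$ is elliptic, after conjugation by the disc automorphism sending the interior fixed point to $0$ the map becomes a rotation, hence the orbit $\{\varphi_n(0)\}$ lies on a pseudohyperbolic circle compactly contained in $\mathbb{D}$, so $|\varphi_n(0)|$ is bounded away from $1$ and the upper bound is bounded. In every case $\limsup_n\|C_\varphi^n\|_{\mathcal{B}}^{1/n}\le 1$.

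For the matching lower bound, the constant function $\mathbf{1}$ lies in $\mathcal{B}$ and satisfies $C_\varphi\mathbf{1}=\mathbf{1}$, so $1\in\sigma_p(C_\varphi)\subset\sigma_{\mathcal{B}}(C_\varphi)$ and $r_{\mathcal{B}}(C_\varphi)\ge 1$. I do not anticipate any real obstacle; the only point that requires a little care is handling the elliptic case, since the auxiliary formula (\ref{11}) is stated only for parabolic and hyperbolic automorphisms and one must instead invoke the conjugation-to-a-rotation argument to keep the orbit away from $\partial\mathbb{D}$.
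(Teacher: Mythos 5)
Your argument is correct, and its core is the same as the paper's: both reduce to the operator-norm bound $\|C_{\varphi_n}\|_{\mathcal{B}\to\mathcal{B}}\le 1+\tfrac12\log\tfrac{1+|\varphi_n(0)|}{1-|\varphi_n(0)|}$ (which you rederive from the Schwarz--Pick identity and the Bloch growth estimate, while the paper cites it from Xiong and Mart\'{\i}n--Vukoti\'c), take $n$-th roots, and get the lower bound from the constant function. Where you diverge is in controlling the growth of the logarithm: you split into cases by the type of the automorphism, invoking formula (\ref{11}) for parabolic and hyperbolic $\varphi$ and a conjugation-to-a-rotation argument for elliptic $\varphi$. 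The paper avoids any case analysis by observing that $\tfrac12\log\tfrac{1+|\varphi_n(0)|}{1-|\varphi_n(0)|}=\varrho(\varphi_n(0),0)\le n\,\varrho(\varphi(0),0)$, which follows from the triangle inequality together with the M\"obius invariance of the hyperbolic metric applied along the orbit $0,\varphi(0),\dots,\varphi_n(0)$; this single line covers all three types at once (and indeed any analytic selfmap). Your route costs more work and imports the dynamical limit (\ref{11}), which the paper reserves for the later spectral computations, but both arguments are complete and yield the same conclusion; your eigenvalue argument for the lower bound ($C_\varphi\mathbf{1}=\mathbf{1}$) is a clean substitute for the paper's cited inequality $1\le\|C_{\varphi_n}\|_{\mathcal{B}\to\mathcal{B}}$.
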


\begin{proof}
By \cite[Corollary 2]{7} and \cite[Lemma 6]{10} we have the following estimate of the composition operator norm for any automorphism $\varphi$ of $\mathbb{D}$ and $n \in \mathbb{N}$:
\begin{equation}\label{12}
1 \ \leq \ \|C_{\varphi_n}\|_{\mathcal{B} \rightarrow \mathcal{B}} \ \leq \ 1 + \frac{1}{2}\log\frac{1+|\varphi_n(0)|}{1-|\varphi_n(0)|} \ \leq \ 1 + \varrho(\varphi(0),0)n,
\end{equation}
where 
\begin{equation*}
\varrho(z,w) := \frac{1}{2}\log\frac{1+\Big|\dfrac{z-w}{1-\bar{z}w}\Big|}{1-\Big|\dfrac{z-w}{1-\bar{z}w}\Big|}
\end{equation*}
is the hyperbolic distance on $\mathbb{D}$. If $\varphi(0) = 0$ then obviously $r_{\mathcal{B}}(C_{\varphi}) = 1$. If on the other hand $\varrho(\varphi(0),0) \neq 0$, then we obtain the following estimate from (\ref{12}) for every $n \in \mathbb{N}$:
\begin{align*}
1 \ \leq \ \|C_{\varphi_n}\|_{\mathcal{B} \rightarrow \mathcal{B}}^{\frac{1}{n}} \ \leq \ \left[ \big(1 + \varrho(\varphi(0),0)n\big)^{\frac{1}{\varrho(\varphi(0),0)n}}\right]^{\varrho(\varphi(0),0)},
\end{align*}
and since $\lim_{x \rightarrow +\infty}(1+x)^{\frac{1}{x}} = 1$ we conclude that $r_{\mathcal{B}}(C_{\varphi}) = 1$.  
\end{proof}

\subsection{The parabolic case}

We begin by describing the spectrum of invertible weighted composition operators $uC_{\varphi}$ on the Bloch space induced by parabolic automorphisms $\varphi$, and for the proof we need the following result.

\begin{lem}\label{21}
Suppose that $\varphi$ is a parabolic automorphism of $\mathbb{D}$ with unique fixed point $a \in \partial \mathbb{D}$, and assume that $u \in A(\mathbb{D})$ is bounded away from zero on $\mathbb{D}$. Then
\begin{equation*}
\lim_{n \rightarrow \infty}\|u_{(n)}\|_{\infty}^{\frac{1}{n}} = |u(a)|.
\end{equation*}
\end{lem}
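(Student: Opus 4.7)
The plan is to conjugate $\varphi$ to a translation on the upper half-plane, where the parabolic dynamics become transparent, and to exploit the fact that a translation orbit visits any compact subset of $\overline{\mathbb{H}}$ only a uniformly bounded number of times. Let $\Phi$ be a M\"obius map sending $\mathbb{D}$ to $\mathbb{H}$ with $\Phi(a)=\infty$. Then $\psi:=\Phi\circ\varphi\circ\Phi^{-1}$ is a M\"obius map fixing $\infty$ with derivative $1$ there, hence a translation $\psi(w)=w+\tau$ for some $\tau\in\mathbb{R}\setminus\{0\}$; writing $w=\Phi(z)$ one has $\varphi_j(z)=\Phi^{-1}(w+j\tau)$. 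Note also that $u\in A(\mathbb{D})$ and the continuous extension of each $\varphi_j$ to $\overline{\mathbb{D}}$ together make $u_{(n)}$ continuous on $\overline{\mathbb{D}}$, so its supremum is attained.

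For the lower bound I would fix any $z_0\in\mathbb{D}$ and use that $\varphi_j(z_0)\to a$, so by continuity of $u$ and $u(a)\ne 0$ we have $\log|u(\varphi_j(z_0))|\to\log|u(a)|$. Ces\`aro averaging gives $|u_{(n)}(z_0)|^{1/n}\to|u(a)|$, and since $\|u_{(n)}\|_\infty\geq|u_{(n)}(z_0)|$ this yields $\liminf_n\|u_{(n)}\|_\infty^{1/n}\geq|u(a)|$.

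For the upper bound, fix $\epsilon>0$ and use $u\in A(\mathbb{D})$ to pick $\delta>0$ with $|u(z)|\leq|u(a)|+\epsilon$ whenever $z\in\overline{\mathbb{D}}$ satisfies $|z-a|<\delta$. The compact set $V:=\{z\in\overline{\mathbb{D}}:|z-a|\geq\delta\}$ maps under $\Phi$ to a compact $K\subset\overline{\mathbb{H}}$ contained in some rectangle $[-R,R]\times[0,R]$. A direct check shows that for any $w\in\overline{\mathbb{H}}$, the orbit $\{w+j\tau:j\geq 0\}$ meets this rectangle in at most $C:=\lfloor 2R/|\tau|\rfloor+1$ points, a bound independent of $w$. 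Pulling back, for every $z\in\overline{\mathbb{D}}$ and every $n$, at most $C$ of the iterates $\varphi_0(z),\dots,\varphi_{n-1}(z)$ lie in $V$. Estimating the remaining $n-C$ factors of $|u(\varphi_j(z))|$ by $|u(a)|+\epsilon$ and the $C$ exceptional ones by $\|u\|_\infty<\infty$ yields $|u_{(n)}(z)|\leq(|u(a)|+\epsilon)^{n-C}\|u\|_\infty^{C}$; taking $n$-th roots, letting $n\to\infty$, and then $\epsilon\to 0$, gives $\limsup_n\|u_{(n)}\|_\infty^{1/n}\leq|u(a)|$.

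The main obstacle is the uniform-in-$z$ counting bound on how often a single orbit exits a neighborhood of $a$; without it one only gets pointwise statements as in the lower-bound step, which yield a $\liminf$ but not the full limit. The half-plane model reduces this to an elementary fact about arithmetic progressions on $\mathbb{R}$, and it is precisely here that the parabolic hypothesis $\varphi'(a)=1$ enters, forcing the conjugate to be a translation rather than a general affine dilation.
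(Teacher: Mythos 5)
Your argument is correct, and it is essentially the proof the paper relies on: the paper simply defers to the proof of Lemma 4.2 in Hyv\"arinen et al.\ \cite{2}, which likewise obtains the lower bound from the Denjoy--Wolff convergence of a single orbit plus Ces\`aro averaging, and the upper bound by conjugating $\varphi$ to a real translation on the half-plane and observing that any orbit spends a uniformly bounded number of steps outside a prescribed neighbourhood of the fixed point. Your write-up is a faithful, self-contained version of that argument (the only cosmetic point being that the final product bound should be taken as a maximum over the at most $C$ exceptional factors, which does not affect the $n$-th-root limit).
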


\begin{proof}
See the proof of \cite[Lemma 4.2]{2}. 
\end{proof}

\begin{thm}\label{13}
Suppose that the weighted composition operator $uC_{\varphi} : \mathcal{B} \rightarrow \mathcal{B}$ is invertible on the Bloch space and assume that the automorphism $\varphi$ is parabolic, with unique fixed point $a \in \partial \mathbb{D}$. If $u \in A(\mathbb{D})$, then
\begin{equation*}
\sigma_{\mathcal{B}}(uC_{\varphi}) \ = \ \big\{\lambda \in \mathbb{C} : |\lambda| = |u(a)|\big\}.
\end{equation*} 
\end{thm}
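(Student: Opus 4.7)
Plan: I would prove $\sigma_{\mathcal{B}}(uC_{\varphi})=\{|\lambda|=|u(a)|\}$ by establishing the two inclusions separately. The first, $\sigma_{\mathcal{B}}(uC_{\varphi})\subseteq\{|\lambda|=|u(a)|\}$, will come from bounding the spectral radii of $T=uC_{\varphi}$ and $T^{-1}$. The second, $\{|\lambda|=|u(a)|\}\subseteq\sigma_{\mathcal{B}}(uC_{\varphi})$, will come from constructing approximate eigenfunctions for each point on the circle.

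For the first inclusion, note that by Theorem \ref{3} the inverse $T^{-1}=(1/(u\circ\varphi^{-1}))C_{\varphi^{-1}}$ has the same structural form: $\varphi^{-1}$ is parabolic with fixed point $a$, and $1/(u\circ\varphi^{-1})\in A(\mathbb{D})$ takes the value $1/u(a)$ at $a$. So it suffices to establish $r_{\mathcal{B}}(T)\leq|u(a)|$; applying the same argument to $T^{-1}$ gives $r_{\mathcal{B}}(T^{-1})\leq 1/|u(a)|$, and combining these confines $\sigma$ to the circle. To bound $r_{\mathcal{B}}(T)$, I would estimate $\|u_{(n)}(f\circ\varphi_{n})\|_{\mathcal{B}}$ for $\|f\|_{\mathcal{B}}=1$ directly. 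Three terms arise: $|u_{(n)}(0)f(\varphi_{n}(0))|\leq\|u_{(n)}\|_{\infty}\cdot\alpha\log\frac{e}{1-|\varphi_{n}(0)|^{2}}$ via (\ref{8}); the term involving $f'$ reduces to $\|u_{(n)}\|_{\infty}$ via the Schwarz--Pick equality $(1-|z|^{2})|\varphi_{n}'(z)|=1-|\varphi_{n}(z)|^{2}$ for automorphisms; and the remaining term $\sup_{z}(1-|z|^{2})|u_{(n)}'(z)|\log\frac{e}{1-|\varphi_{n}(z)|^{2}}$ is controlled by expanding $u_{(n)}'$ via the product rule, exploiting $u\in\mathcal{M}(\mathcal{B})$ and $u$ bounded away from zero (both from Theorem \ref{3}), together with the identity $(1-|\varphi_{n}(z)|^{2})/(1-|\varphi_{k}(z)|^{2})=|\varphi_{n-k}'(\varphi_{k}(z))|$ and the polynomial growth $\|\varphi_{m}'\|_{\infty}=O(m^{2})$ (using $1-|\varphi_{m}(0)|\asymp 1/m^{2}$ for parabolic $\varphi$). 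This yields $\|(uC_{\varphi})^{n}\|_{\mathcal{B}}\leq P(n)\|u_{(n)}\|_{\infty}$ for some polynomial $P$, and combined with Lemma \ref{21} ($\|u_{(n)}\|_{\infty}^{1/n}\to|u(a)|$) gives the desired radius estimate.

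For the reverse inclusion, I would show that every $\lambda=e^{i\theta}u(a)$ is in the approximate point spectrum of $uC_{\varphi}$. Conjugating by a Cayley-type map $\psi:\mathbb{D}\to\mathbb{H}$ that sends $a$ to $\infty$, the symbol $\varphi$ becomes a translation $w\mapsto w+c$ and the operator becomes $\tilde T\tilde f(w)=\tilde u(w)\tilde f(w+c)$ with $\tilde u(\infty)=u(a)$. Candidate approximate eigenfunctions have the form $\tilde f_{n}(w)=e^{i(\theta/c)w}\chi_{n}(w)$ with $\chi_{n}$ slowly-varying holomorphic functions concentrated near $\infty$, so that $\tilde T\tilde f_{n}-\lambda\tilde f_{n}=e^{i\theta}e^{i(\theta/c)w}(\tilde u(w)\chi_{n}(w+c)-u(a)\chi_{n}(w))$ is small wherever $\chi_{n}(w+c)\approx\chi_{n}(w)$ and $\tilde u(w)\approx u(a)$. \textbf{The main obstacle} is making this construction rigorous in the Bloch norm, which is sensitive to both magnitude and derivatives: $\chi_{n}$ must be chosen so that $\tilde f_{n}$ has controlled, non-degenerate Bloch norm and the ratio $\|\tilde T\tilde f_{n}-\lambda\tilde f_{n}\|_{\mathcal{B}}/\|\tilde f_{n}\|_{\mathcal{B}}$ tends to zero in Bloch norm (not merely pointwise). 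A direct similarity argument via $M_{e^{i(\theta/c)\psi}}$ is tempting but that multiplier fails to be invertible on $\mathcal{B}$ (one of $e^{\pm i(\theta/c)\psi}$ is unbounded on the disc), so rotational invariance of the spectrum must be implemented at the level of approximating sequences.
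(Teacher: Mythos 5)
Your first inclusion is essentially the paper's argument. The paper also reduces to a polynomial bound $\|(uC_{\varphi})^{n}\|_{\mathcal{B}\to\mathcal{B}}\leq P(n)\|u_{(n)}\|_{\infty}$: it factors $(uC_{\varphi})^{n}=M_{u_{(n)}}C_{\varphi_{n}}$, expands $u_{(n)}'$ by the product rule, uses the Schwarz--Pick equality and the bound $\|C_{\psi_{j}}\|_{\mathcal{B}\to\mathcal{B}}\leq 1+\varrho(\psi(0),0)j$ from \eqref{12} in place of your $\|\varphi_{m}'\|_{\infty}=O(m^{2})$, and invokes Lemma \ref{20} to control the term involving $u'$. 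Combined with Lemma \ref{21} and the same argument applied to $(uC_{\varphi})^{-1}$, this gives the containment in the circle exactly as you describe. This half is correct.

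The reverse inclusion, however, is a genuine gap, and you have flagged it yourself: the entire content of that half of the proof is the construction of approximate eigenvectors whose error is small \emph{in the Bloch norm}, and your proposal stops precisely at the point where that construction would have to be carried out. The difficulty is not cosmetic. The Bloch seminorm of the modulated functions $e^{i\tau\psi}\chi_{n}$ is carried by derivative terms such as $(\tilde u(w)-u(a))\,i\tau e^{i\tau w}\chi_{n}(w)$ and $\tilde u(w)e^{i\tau w}(\chi_{n}(w+c)-\chi_{n}(w))'$, whose suprema are attained on horocycles where neither $\tilde u(w)\approx u(a)$ nor $\chi_{n}(w+c)\approx\chi_{n}(w)$ is automatic, and one must simultaneously keep $\|\tilde f_{n}\|_{\mathcal{B}}$ bounded below; you give no candidate $\chi_{n}$ and no mechanism for the required cancellation. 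The paper avoids approximate eigenfunctions entirely. For $|\lambda|=|u(a)|$ it shows $r_{\mathcal{B}}(\lambda-uC_{\varphi})\geq 2r_{\mathcal{B}}(uC_{\varphi})$, which together with the already-proved inclusion of $\sigma_{\mathcal{B}}(uC_{\varphi})$ in the circle of radius $|\lambda|$ forces $-\lambda\in\sigma_{\mathcal{B}}(uC_{\varphi})$ (equality in the triangle inequality). The lower bound on $r_{\mathcal{B}}(\lambda-uC_{\varphi})$ is obtained by testing $(\lambda-uC_{\varphi})^{2n}$ on an $H^{\infty}$-interpolating sequence of functions $f_{n}$ with $f_{n}(\varphi_{k}(z_{n}))=\delta_{kn}$ at $z_{n}=\varphi_{n}(0)$, so that the binomial expansion collapses to the single term $\binom{2n}{n}(-\lambda)^{n}u_{(n)}(z_{n})$, evaluated via the embedding $\mathcal{B}\subset H_{v_{s}}^{\infty}$ and the limit \eqref{11}. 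To complete your proof you would either need to supply the missing Bloch-norm estimates for your $\chi_{n}$, or switch to this interpolation/spectral-radius device.
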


\begin{proof}
We begin by showing that the spectrum is contained in the proposed circle of radius $|u(a)|$. According to Theorem \ref{3}, $u$ belongs to $\mathcal{M}(\mathcal{B})$ and is bounded away from zero on $\mathbb{D}$, from which follows that $u(a) \neq 0$. Since $r_{\mathcal{B}}(C_{\varphi}) = 1$ by Lemma \ref{9} and $(uC_{\varphi})^n = u_{(n)} C_{\varphi_n} = M_{u_{(n)}}C_{\varphi_n}$ by Theorem \ref{2}, we only need to focus on the operator norm of $M_{u_{(n)}} : \mathcal{B} \rightarrow \mathcal{B}$:
\begin{align*}
& \ \ \ \ \ \ \ \ \ \ \ \ \ \ \ \ \ \ \ \ \ \  \ \ \ \ \  \|M_{u_{(n)}}\|_{\mathcal{B} \rightarrow \mathcal{B}} \ = \ \sup_{\|f\|_{\mathcal{B}} \leq 1}{\|u_{(n)}\cdot f\|_{\mathcal{B}}} \\
&\leq \ \sup_{\|f\|_{\mathcal{B}}\leq1}\sup_{z \in \mathbb{D}}{(1-|z|^2)|u_{(n)}^{\prime}(z)f(z)|} \ + \, \sup_{\|f\|_{\mathcal{B}}\leq1}\sup_{z \in \mathbb{D}}{(1-|z|^2)|u_{(n)}(z)f^{\prime}(z)|} \ +  \\
&\ \ \ \ \sup_{\|f\|_{\mathcal{B}}\leq1}|u_{(n)}(0)f(0)| \\
&= \ \sup_{\|f\|_{\mathcal{B}}\leq1}\sup_{z \in \mathbb{D}}{(1-|z|^2)\bigg|\sum_{j=0}^{n-1}{\frac{u_{(n)}(z)}{u \circ \varphi_j(z)}\cdot (u \circ \varphi_j)^{\prime}(z)}\bigg||f(z)|} \ \ + \\ 
&\ \ \ \ \sup_{\|f\|_{\mathcal{B}}\leq1}\sup_{z \in \mathbb{D}}{(1-|z|^2)|u_{(n)}(z)f^{\prime}(z)|} \ + \ \sup_{\|f\|_{\mathcal{B}}\leq1}|u_{(n)}(0)f(0)| \\
&\leq \ \sum_{j=0}^{n-1}{\Big\|\frac{u_{(n)}}{u \circ \varphi_j}\Big\|_{\infty}}\sup_{\|f\|_{\mathcal{B}}\leq1}\sup_{z \in \mathbb{D}}\,(1-|z|^2)|u^{\prime}(\varphi_j(z))||\varphi_j^{\prime}(z)||f(z)| \ + \ 2\|u_{(n)}\|_{\infty} \\
&= \ \sum_{j=0}^{n-1}{\Big\|\frac{u_{(n)}}{u \circ \varphi_j}\Big\|_{\infty}}\sup_{\|f\|_{\mathcal{B}}\leq1}\sup_{z \in \mathbb{D}}\,(1-|\varphi_j(z)|^2)|u^{\prime}(\varphi_j(z))f(z)| \ + \ 2\|u_{(n)}\|_{\infty} \\
&= \ \sum_{j=0}^{n-1}{\Big\|\frac{u_{(n)}}{u \circ \varphi_j}\Big\|_{\infty}}\sup_{\|f\|_{\mathcal{B}}\leq 1}\sup_{z \in \mathbb{D}}\,(1-|z|^2)|u^{\prime}(z)f(\psi_j(z))| \ + \ 2\|u_{(n)}\|_{\infty},
\end{align*}
where we used that $|\varphi^{\prime}(z)| = \frac{1-|\varphi(z)|^2}{1-|z|^2}$ if $\varphi$ is an automorphism of $\mathbb{D}$, and introduced the function $\psi := \varphi^{-1}$ to simplify notation. Note that $\psi$ is also a parabolic automorphism with fixed point $a$, and $\psi_j = (\varphi^{-1})_j = \varphi_j^{-1}$. Furthermore,
\begin{align*}
\sup_{z \in \mathbb{D}}\,(1-|z|^2)|u^{\prime}(z)f(\psi_j(z))| \ &= \ \|M_{u^{\prime}}(f \circ \psi_j)\|_{H_{v_1}^{\infty}} \\
&\leq \ \|M_{u^{\prime}}\|_{\mathcal{B} \rightarrow H_{v_1}^{\infty}}\|C_{\psi_j}\|_{\mathcal{B} \rightarrow \mathcal{B}}\|f\|_{\mathcal{B}},
\end{align*}
where the norm $\|M_{u^{\prime}}\|_{\mathcal{B} \rightarrow H_{v_1}^{\infty}}$ is finite since $u \in \mathcal{M}(\mathcal{B})$, as discussed in Lemma \ref{20}. Now using inequality (\ref{12}) and the fact that $u$ is bounded away from zero on $\mathbb{D}$, we obtain the following estimate of the operator norm of $M_{u_{(n)}} : \mathcal{B} \rightarrow \mathcal{B}$: 
\begin{align*}
\|M_{u_{(n)}}\|_{\mathcal{B} \rightarrow \mathcal{B}} \ &\leq \ \sum_{j=0}^{n-1}{\Big\|\frac{u_{(n)}}{u \circ \varphi_j}\Big\|_{\infty}}\|M_{u^{\prime}}\|_{\mathcal{B} \rightarrow H_{v_1}^{\infty}}\|C_{\psi_j}\|_{\mathcal{B} \rightarrow \mathcal{B}} \ + \ 2\|u_{(n)}\|_{\infty} \\ 
&\leq \ \|M_{u^{\prime}}\|_{\mathcal{B} \rightarrow H_{v_1}^{\infty}}\sum_{j=0}^{n-1}{\Big\|\frac{u_{(n)}}{u \circ \varphi_j}\Big\|_{\infty}}\big(1+\varrho(\psi(0),0)j\big) \ + \ 2\|u_{(n)}\|_{\infty} \\
&\leq \ \left[\|M_{u^{\prime}}\|_{\mathcal{B} \rightarrow H_{v_1}^{\infty}}\Big\|\frac{1}{u}\Big\|_{\infty} + 2\right]n\big(1+\varrho(\psi(0),0)n\big)\|u_{(n)}\|_{\infty}.
\end{align*}
Applying this to the spectral radius and using Lemmas \ref{9} and \ref{21} gives
\begin{align*}
r_{\mathcal{B}}(uC_{\varphi}) \ &= \ \lim_{n \rightarrow \infty}\|(uC_{\varphi})^n\|_{\mathcal{B} \rightarrow \mathcal{B}}^{\frac{1}{n}} \ \leq \ \limsup_{n \rightarrow \infty}\|M_{u_{(n)}}\|_{\mathcal{B} \rightarrow \mathcal{B}}^{\frac{1}{n}}\hspace*{0.3mm}r_{\mathcal{B}}(C_{\varphi}) \\
&\leq \ \lim_{n \rightarrow \infty}\left[\|M_{u^{\prime}}\|_{\mathcal{B} \rightarrow H_{v_1}^{\infty}}\Big\|\frac{1}{u}\Big\|_{\infty} + 2\right]^{\frac{1}{n}}n^{\frac{1}{n}}\big(1+\varrho(\psi(0),0)n\big)^{\frac{1}{n}}\|u_{(n)}\|_{\infty}^{\frac{1}{n}} \\
&= \ |u(a)|.
\end{align*}
Since $\big(uC_{\varphi}\big)^{-1}  = \frac{1}{u\circ\varphi^{-1}}C_{\varphi^{-1}}$, by Theorem \ref{3}, where $\varphi^{-1}$ is a parabolic automorphism with unique fixed point $a$, the above result also shows that 
\begin{equation*}
r_{\mathcal{B}}\big((uC_{\varphi})^{-1}\big) \ \leq \ \bigg|\frac{1}{u(\varphi^{-1}(a))}\bigg| \ =  \ |u(a)|^{-1}.
\end{equation*} 
Now if $\lambda \in \sigma_{\mathcal{B}}(uC_{\varphi})$, then we also have that $\lambda^{-1} \in \sigma_{\mathcal{B}}\big((uC_{\varphi})^{-1}\big)$. From this follows that $|\lambda| \leq r_{\mathcal{B}}(uC_{\varphi}) \leq |u(a)|$ and $|\lambda|^{-1} \leq r_{\mathcal{B}}\big((uC_{\varphi})^{-1}\big) \leq |u(a)|^{-1}$, so that $|\lambda| = |u(a)|$. Thus
\begin{equation}\label{7}
\sigma_{\mathcal{B}}(uC_{\varphi}) \ \subseteq \ \big\{\lambda \in \mathbb{C} : |\lambda| = |u(a)|\big\},
\end{equation}  
and obviously $r_{\mathcal{B}}(uC_{\varphi}) = |u(a)|$. 

In order to prove the reverse inclusion in (\ref{7}), let $\lambda$ be a complex number of modulus $|\lambda| = |u(a)| = r_{\mathcal{B}}(uC_{\varphi})$. As in the proof of \cite[Theorem 4.3]{2} it is then, by the Spectral Mapping Theorem, enough to show that
\begin{equation*}
r_{\mathcal{B}}(\lambda - uC_{\varphi}) \ \geq \ 2r_{\mathcal{B}}(uC_{\varphi}),
\end{equation*}
which is done as follows. The sequence $\{z_n\}_{n=0}^{\infty}$, defined by $z_n = \varphi_n(0)$, is interpolating for $H^{\infty}$ since $\varphi$ is a parabolic automorphism (see the comment preceding \cite[Theorem 4.3]{2}), so by the Open Mapping Theorem there is a constant $c > 0$ and a sequence $\{f_n\}_{n=0}^{\infty} \subset H^{\infty}$ such that for all $n \in \mathbb{N}$ we have $\|f_n\|_{\infty} \leq c$ and
\begin{equation}\label{35}
f_n\big(\varphi_k(z_n)\big) \ = \begin{cases}
1, \ \ \text{if} \ k = n \\
0, \ \ \text{if} \ k \neq n. \\
\end{cases}
\end{equation}
Note that $\mathcal{B} \subset H_{v_s}^{\infty}$ for every $s > 0$, since if $f \in \mathcal{B}$ then by (\ref{8})
\begin{equation*}
\|f\|_{H_{v_s}^{\infty}} \ \leq \ \alpha\|f\|_{\mathcal{B}}\sup_{z \in \mathbb{D}}\,(1-|z|^2)^s\log\frac{e}{1-|z|^2} \ < +\infty.
\end{equation*}
Choose some $s > 0$ and let
\begin{equation*}
c_s  :=  \alpha\sup_{z \in \mathbb{D}}\,(1-|z|^2)^s\log\frac{e}{1-|z|^2},
\end{equation*}
so that $\|f\|_{H_{v_s}^{\infty}} \leq c_s\|f\|_{\mathcal{B}}$ for every $f \in \mathcal{B}$. The interpolating sequence $\{f_n\}_{n=0}^{\infty}$ satisfies $\|f_n\|_{\mathcal{B}} \leq \|f_n\|_{\infty} \leq c$ for all $n \in \mathbb{N}$, which gives that
\begin{align*}
\|(\lambda - uC_{\varphi})^{2n}\|_{\mathcal{B} \rightarrow \mathcal{B}} \ &\geq \ c^{-1}\|(\lambda - uC_{\varphi})^{2n}f_n\|_{\mathcal{B}} \\
&\geq \ (c c_s)^{-1}\|(\lambda - uC_{\varphi})^{2n}f_n\|_{H_{v_s}^{\infty}} \\
&\geq \ (c c_s)^{-1}(1-|z_n|^2)^s\big|\left[(\lambda - uC_{\varphi})^{2n}f_n\right](z_n)\big|
\end{align*}
for all $n \in \mathbb{N}$. Furthermore,
\begin{align*}
\left[(\lambda - uC_{\varphi})^{2n}f_n\right](z_n) \ &= \ \sum_{k=0}^{2n}{\binom{2n}{k}\lambda^{2n-k}\left[(-uC_ {\varphi})^kf_n\right](z_n)} \\
&= \ \sum_{k=0}^{2n}{\binom{2n}{k}\lambda^{2n-k}(-1)^ku_{(k)}(z_n)f_n\big(\varphi_k(z_n)\big)} \\
&= \ \binom{2n}{n}(-1)^n\lambda^{n}u_{(n)}(z_n)
\end{align*}
by (\ref{35}), so
\begin{align*}
&\|(\lambda - uC_{\varphi})^{2n}\|_{\mathcal{B} \rightarrow \mathcal{B}} \ \geq \ (c c_s)^{-1} \binom{2n}{n}|\lambda|^{n}|u_{(n)}(z_n)|(1-|z_n|^2)^s \\
&= \ (c c_s)^{-1} \binom{2n}{n}|\lambda|^{n}|u_{(n)}(z_n)|\bigg(\frac{1-|z_n|^2}{1-|\varphi_n(z_n)|^2}\bigg)^s\big(1-|\varphi_n(z_n)|^2\big)^s \\
&= \ (c c_s)^{-1} \binom{2n}{n}|\lambda|^{n}\bigg|\frac{u_{(n)}(z_n)}{\varphi_n^{\prime}(z_n)^s}\bigg|\big(1-|\varphi_{2n}(0)|^2\big)^s \\
&\geq \ (c c_s)^{-1} \binom{2n}{n}|\lambda|^{n}|\omega_{(n)}(z_n)|\big(1-|\varphi_{2n}(0)|\big)^s,
\end{align*}
where the function 
\begin{equation*}
\omega(z):=\frac{u(z)}{\varphi^{\prime}(z)^s}
\end{equation*}
introduced in the proof of \cite[Theorem 4.3]{2} satisfies 
\begin{equation*}
\omega_{(n)}(z) = \frac{u_{(n)}(z)}{\varphi_n^{\prime}(z)^s}.
\end{equation*}
In the same proof it was also shown that
\begin{equation*}
\lim_{n \rightarrow \infty}{|\omega_{(n)}(z_n)|^{\frac{1}{2n}}}  \ = \ \dfrac{|u(a)|^{\frac{1}{2}}}{\varphi^{\prime}(a)^{\frac{s}{2}}} \ = \ |u(a)|^\frac{1}{2},
\end{equation*}
and mentioned that $\lim_{n \rightarrow \infty}{\binom{2n}{n}^{\frac{1}{2n}}} = 2$. Using the parabolic version of the limit (\ref{11}), we see that
\begin{align*}
r_{\mathcal{B}}(\lambda - uC_{\varphi}) \ &= \ \lim_{n \rightarrow \infty}\|(\lambda - uC_{\varphi})^{2n}\|_{\mathcal{B} \rightarrow \mathcal{B}}^{\frac{1}{2n}} \\
&\geq \ \lim_{n \rightarrow \infty}(c c_s)^{-\frac{1}{2n}} \binom{2n}{n}^{\frac{1}{2n}}|\lambda|^{\frac{1}{2}}|\omega_{(n)}(z_n)|^{\frac{1}{2n}}\big(1-|\varphi_{2n}(0)|\big)^{\frac{s}{2n}}\\
&= \ 2|\lambda|^{\frac{1}{2}}|u(a)|^{\frac{1}{2}} \ = \ 2|u(a)| \ = \ 2r_{\mathcal{B}}(uC_{\varphi}).
\end{align*}
Since $r_{\mathcal{B}}(\lambda - uC_{\varphi}) \geq 2r_{\mathcal{B}}(uC_{\varphi})$, we get from the proof of \cite[Theorem 4.3]{2} that $-\lambda \in \sigma_{\mathcal{B}}(uC_{\varphi})$, which shows that
\begin{equation*}
\sigma_{\mathcal{B}}(uC_{\varphi}) \ \supseteq \ \big\{-\lambda \in \mathbb{C} : |\lambda| = |u(a)|\big\} \ = \ \big\{\lambda \in \mathbb{C} : |\lambda| = |u(a)|\big\},
\end{equation*} 
and the proof is complete.
\end{proof}

\subsection{The hyperbolic case}

In this subsection we investigate the spectrum of weighted composition
operators $uC_\varphi\colon \mathcal{B}\to\mathcal{B}$ generated by hyperbolic symbols $\varphi$. The results in this case are not complete. We obtain the spectral radius $r_{\mathcal{B}}(uC_{\varphi})$ and thereby an inclusion of the spectrum in an annulus, which turns out to coincide with the spectrum under additional assumptions on the function $u$. The main result is given in Theorem \ref{17}.

\begin{lem}\label{22}
Suppose that $\varphi$ is a hyperbolic automorphism of $\mathbb{D}$ with fixed points $a,b \in \partial \mathbb{D}$, and assume that $u \in A(\mathbb{D})$ is bounded away from zero on $\mathbb{D}$. Then
\begin{equation*}
\lim_{n \rightarrow \infty}\|u_{(n)}\|_{\infty}^{\frac{1}{n}} = \max\{|u(a)|,|u(b)|\}.
\end{equation*}
\end{lem}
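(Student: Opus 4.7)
The plan is to imitate the parabolic argument of Lemma \ref{21} (cf.~\cite[Lemma 4.2]{2}) using the dilation model of a hyperbolic automorphism, which lets both fixed points be tracked simultaneously. First I would pick a M\"obius transformation $\tau$ sending $\mathbb{D}$ onto the upper half-plane $H$ with $\tau(a)=\infty$ and $\tau(b)=0$, so that $\sigma:=\tau\circ\varphi\circ\tau^{-1}$ becomes the dilation $\sigma(w)=\lambda w$ with $\lambda=1/\varphi'(a)>1$. Setting $\tilde u:=u\circ\tau^{-1}$, one gets
\begin{equation*}
\|u_{(n)}\|_\infty \ = \ \sup_{w\in H}\prod_{k=0}^{n-1}|\tilde u(\lambda^k w)|,
\end{equation*}
and $\tilde u$ extends continuously to the one-point compactification of $\overline{H}$, with $\tilde u(\infty)=u(a)$ and $\tilde u(0)=u(b)$. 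Write $M:=\max\{|u(a)|,|u(b)|\}$.

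For the upper bound, fix $\eps>0$ and use continuity of $\tilde u$ at $0$ and $\infty$ to choose $0<r_1<r_2<\infty$ with $|\tilde u(w)|\leq M+\eps$ whenever $|w|\leq r_1$ or $|w|\geq r_2$. For any $w\in H$ the set of indices $k$ for which $\lambda^k|w|\in(r_1,r_2)$ is a discrete interval of length at most $N_\eps:=\lfloor\log(r_2/r_1)/\log\lambda\rfloor+1$, a bound independent of $w$ and $n$. At the remaining indices $|\tilde u(\lambda^k w)|\leq M+\eps$, hence
\begin{equation*}
|u_{(n)}(z)|\ \leq \ \|u\|_\infty^{N_\eps}(M+\eps)^{n-N_\eps},
\end{equation*}
so $\limsup_{n\to\infty}\|u_{(n)}\|_\infty^{1/n}\leq M+\eps$, and letting $\eps\to 0$ yields the desired half.

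For the lower bound assume $|u(b)|=M$ (the other case is symmetric). Pick $\eps>0$ small enough that $|u(b)|-\eps>0$, which is possible since $u$ is bounded away from zero, and choose $r>0$ with $|\tilde u(w)|\geq|u(b)|-\eps$ for all $w\in\overline{H}$ satisfying $|w|\leq r$. For each $n$ take any $w_n\in H$ with $|w_n|=r\lambda^{-(n-1)}$; the orbit $\{\lambda^k w_n\}_{k=0}^{n-1}$ then lies inside $\{|w|\leq r\}$, so $|u_{(n)}(\tau^{-1}(w_n))|\geq(|u(b)|-\eps)^n$ and therefore $\liminf_{n\to\infty}\|u_{(n)}\|_\infty^{1/n}\geq|u(b)|=M$. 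The main obstacle will be the geometric/combinatorial step in the upper bound, namely the bound on the number of iterates falling into the middle annulus $\{r_1<|w|<r_2\}$; this is exactly where the hyperbolic structure is used, since along the dilation $\lambda^k|w|$ is monotone in $k$ and taking logarithms reduces the count to a single interval whose length depends only on $r_2/r_1$ and $\lambda$, not on the starting point $w$.
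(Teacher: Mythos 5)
Your argument is correct and is essentially the proof the paper points to: the stated proof is just a citation of \cite[Lemma~4.4]{2}, which uses the same half-plane dilation model and the same three-region decomposition of the orbit (near $b$, a uniformly bounded middle stretch, near $a$), with the lower bound obtained by launching the orbit from points at distance $r\lambda^{-(n-1)}$ from the repulsive fixed point exactly as you do. The only cosmetic point is that the displayed bound $\|u\|_\infty^{N_\eps}(M+\eps)^{n-N_\eps}$ tacitly assumes $\|u\|_\infty\geq M+\eps$; replacing $\|u\|_\infty^{N_\eps}$ by $\max\{\|u\|_\infty,M+\eps\}^{N_\eps}$ covers all cases without affecting the limit.
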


\begin{proof}
See the proof of \cite[Lemma 4.4]{2}. 
\end{proof}

\begin{thm}\label{17}
Suppose that the weighted composition operator $uC_{\varphi} : \mathcal{B} \rightarrow \mathcal{B}$ is invertible on the Bloch space and assume that the automorphism $\varphi$ is hyperbolic, with attractive fixed point $a \in \partial \mathbb{D}$ and repulsive fixed point $b \in \partial\mathbb{D}$. If $u \in A(\mathbb{D})$, then $r_{\mathcal{B}}(uC_{\varphi}) \ = \ \max\{|u(a)|,|u(b)|\}$ and
\begin{equation*}
\sigma_{\mathcal{B}}(uC_{\varphi}) \ \subseteq \ \big\{\lambda \in \mathbb{C} :  \min\{|u(a)|,|u(b)|\} \leq |\lambda| \leq \max\{|u(a)|,|u(b)|\}\big\}.
\end{equation*} 
\end{thm}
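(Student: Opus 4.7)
The plan is to transfer the operator-norm computation from the parabolic proof of Theorem~\ref{13} to the hyperbolic setting for the upper bound, apply the resulting estimate once more to the inverse operator to obtain the annulus inclusion, and establish the matching lower bound on the spectral radius by testing $uC_\varphi$ against two Bloch functions tailored to the two boundary fixed points.

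For the upper bound, the entire chain of inequalities in the proof of Theorem~\ref{13} that produced
\begin{equation*}
\|M_{u_{(n)}}\|_{\mathcal{B}\to\mathcal{B}} \ \leq \ \left[\|M_{u'}\|_{\mathcal{B}\to H_{v_1}^\infty}\Big\|\tfrac{1}{u}\Big\|_\infty+2\right] n\bigl(1+\varrho(\psi(0),0)n\bigr)\|u_{(n)}\|_\infty
\end{equation*}
is dynamically neutral: it uses only Lemma~\ref{9}, Theorem~\ref{2} (applicable because automorphisms are degree-one Blaschke products), the chain-rule expansion of $u_{(n)}'$, inequality (\ref{12}), and the hypothesis that $u$ is bounded away from zero. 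Combining with Lemma~\ref{22} in place of Lemma~\ref{21} therefore yields $r_{\mathcal{B}}(uC_\varphi)\leq \max\{|u(a)|,|u(b)|\}$. Applying the same reasoning to the inverse $(uC_\varphi)^{-1}=\tfrac{1}{u\circ\psi}C_\psi$, whose weight $1/(u\circ\psi)$ is again in $A(\mathbb{D})$ and bounded away from zero and whose symbol $\psi=\varphi^{-1}$ is hyperbolic with $b$ attractive and $a$ repulsive, gives $r_{\mathcal{B}}((uC_\varphi)^{-1})\leq 1/\min\{|u(a)|,|u(b)|\}$. Since $\lambda^{-1}\in\sigma_{\mathcal{B}}((uC_\varphi)^{-1})$ whenever $\lambda\in\sigma_{\mathcal{B}}(uC_\varphi)$, the two upper bounds combine to give the claimed annulus inclusion.

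For the reverse spectral-radius inequality $r_{\mathcal{B}}(uC_\varphi)\geq \max\{|u(a)|,|u(b)|\}$ I would test $(uC_\varphi)^n$ against two explicit Bloch functions. The constant $f\equiv 1$ gives $(uC_\varphi)^n 1=u_{(n)}$ and $\|u_{(n)}\|_\mathcal{B}\geq |u_{(n)}(0)|$; since $\varphi_j(0)\to a$ and $u$ is continuous and non-vanishing at $a$, a Ces\`aro average of $\log|u(\varphi_j(0))|\to\log|u(a)|$ forces $|u_{(n)}(0)|^{1/n}\to|u(a)|$ and so $r_{\mathcal{B}}(uC_\varphi)\geq |u(a)|$. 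To extract $|u(b)|$, take $f_b(z):=\log\frac{e}{1-\bar{b}z}\in\mathcal{B}$ (finite Bloch norm, $f_b(0)=1$) and evaluate at $w_n:=\psi_n(0)$: since $\varphi_n(w_n)=0$, the value reduces to $u_{(n)}(w_n)$, and inequality (\ref{8}) yields
\begin{equation*}
\|(uC_\varphi)^n\|_{\mathcal{B}\to\mathcal{B}} \ \geq \ \frac{|u_{(n)}(w_n)|}{\|f_b\|_\mathcal{B}\,\alpha\log\frac{e}{1-|w_n|^2}}.
\end{equation*}
Rewriting $u_{(n)}(w_n)=\prod_{k=1}^{n}u(\psi_k(0))$ and Ces\`aro-averaging along $\psi_k(0)\to b$ gives $|u_{(n)}(w_n)|^{1/n}\to|u(b)|$, while the hyperbolic version of (\ref{11}) applied to $\psi$ yields $(1-|w_n|^2)^{1/n}\to\psi'(b)=\varphi'(a)\in(0,1)$, so that the logarithmic denominator contributes $1$ after taking $n$-th roots. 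Thus $r_{\mathcal{B}}(uC_\varphi)\geq |u(b)|$, and the spectral-radius identity follows.

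The main obstacle is the lower bound associated to the repulsive fixed point: the constant function samples only the forward orbit $\varphi_j(0)\to a$ and so picks up $|u(a)|$ automatically, but gives no access to $|u(b)|$ when that value dominates. The remedy is the boundary-singular test function $f_b$, evaluated along the backward orbit $w_n=\psi_n(0)\to b$; the exponential shrinkage of $1-|w_n|^2$ supplied by~(\ref{11}) outpaces the merely logarithmic cost in the Bloch growth estimate~(\ref{8}), which is exactly what is needed for $n$-th roots to isolate $|u(b)|$.
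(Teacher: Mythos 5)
Your upper bound and the annulus inclusion follow the paper's proof exactly: the same transfer of the $\|M_{u_{(n)}}\|_{\mathcal{B}\to\mathcal{B}}$ estimate from the proof of Theorem~\ref{13} (which is indeed dynamically neutral), Lemma~\ref{22} in place of Lemma~\ref{21}, and the same passage to the inverse operator. Where you genuinely diverge is the lower bound $r_{\mathcal{B}}(uC_{\varphi})\geq\max\{|u(a)|,|u(b)|\}$. The paper obtains it abstractly: it quotes the inequality $\|u_{(n)}\|_{\infty}\leq\|M_{u_{(n)}}\|_{\mathcal{B}\to\mathcal{B}}$ from \cite[Lemma 1]{9}, factors $M_{u_{(n)}}=(uC_{\varphi})^n(C_{\varphi_n})^{-1}$, uses $r_{\mathcal{B}}(C_{\varphi^{-1}})=1$, and reads off the limit from Lemma~\ref{22}. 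You instead test $(uC_{\varphi})^n$ against two explicit Bloch functions: the constant $1$, evaluated at $0$ along the forward orbit $\varphi_j(0)\to a$, yields $|u(a)|$ by Ces\`aro averaging of $\log|u(\varphi_j(0))|$; and the singular function $f_b(z)=\log\frac{e}{1-\bar bz}$, evaluated along the backward orbit $w_n=\psi_n(0)\to b$, yields $|u(b)|$ because the cost of the pointwise-evaluation estimate (\ref{8}) is only $\log\frac{e}{1-|w_n|^2}$, which grows linearly in $n$ by the hyperbolic limit (\ref{11}) applied to $\psi$ (with $\psi'(b)=\varphi'(a)\in(0,1)$) and so vanishes under $n$-th roots. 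Both arguments are correct; I checked in particular that $f_b(\varphi_n(w_n))=f_b(0)=1$, that $u_{(n)}(w_n)=\prod_{k=1}^{n}u(\psi_k(0))$, and that continuity and nonvanishing of $u$ at $a$ and $b$ justify the Ces\`aro limits. Your route is more self-contained and more transparent: it avoids the external multiplier-norm inequality of \cite{9}, does not need Lemma~\ref{22} for the lower bound, and makes visible why each fixed point contributes its value of $|u|$ (the forward orbit sees only $a$, which is exactly why the backward-orbit test function $f_b$ is indispensable when $|u(b)|>|u(a)|$). The paper's version is shorter and reuses machinery already in place.
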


\begin{proof}
As in the proof of Theorem \ref{13}, $u$ belongs to $\mathcal{M}(\mathcal{B})$ and is bounded away from zero on $\mathbb{D}$, so that $u(a), u(b) \neq 0$. Since $r_{\mathcal{B}}(C_{\varphi}) = 1$ by Lemma \ref{9} and $(uC_{\varphi})^n = M_{u_{(n)}}C_{\varphi_n}$, it is again enough to consider the operator norm of $M_{u_{(n)}} : \mathcal{B} \rightarrow \mathcal{B}$. Through identical calculations as in the proof of Theorem \ref{13}, observing that $\psi := \varphi^{-1}$ is also a hyperbolic automorphism, we obtain the estimate
\begin{align*}
\|M_{u_{(n)}}\|_{\mathcal{B} \rightarrow \mathcal{B}} \ \leq \ \left[\|M_{u^{\prime}}\|_{\mathcal{B} \rightarrow H_{v_1}^{\infty}}\Big\|\frac{1}{u}\Big\|_{\infty} + 2\right]n\big(1+\varrho(\psi(0),0)n\big)\|u_{(n)}\|_{\infty},
\end{align*}
and so by using Lemmas \ref{9} and \ref{22} one gets that
\begin{align*}
r_{\mathcal{B}}(uC_{\varphi}) \ &= \ \lim_{n \rightarrow \infty}\|(uC_{\varphi})^n\|_{\mathcal{B} \rightarrow \mathcal{B}}^{\frac{1}{n}} \ \leq \ \limsup_{n \rightarrow \infty}\|M_{u_{(n)}}\|_{\mathcal{B} \rightarrow \mathcal{B}}^{\frac{1}{n}}\hspace*{0.3mm}r_{\mathcal{B}}(C_{\varphi}) \\
&\leq \ \lim_{n \rightarrow \infty}\left[\|M_{u^{\prime}}\|_{\mathcal{B} \rightarrow H_{v_1}^{\infty}}\Big\|\frac{1}{u}\Big\|_{\infty} + 2\right]^{\frac{1}{n}}n^{\frac{1}{n}}\big(1+\varrho(\psi(0),0)n\big)^{\frac{1}{n}}\|u_{(n)}\|_{\infty}^{\frac{1}{n}} \\
&= \ \max\{|u(a)|,|u(b)|\}.
\end{align*}
On the other hand, $\|u_{(n)}\|_{\infty} \leq \|M_{u_{(n)}}\|_{\mathcal{B} \rightarrow \mathcal{B}}$ by \cite[Lemma 1]{9}, so
\begin{align*}
\|u_{(n)}\|_{\infty}^{\frac{1	}{n}} \ &\leq \ \|M_{u_{(n)}}\|_{\mathcal{B} \rightarrow \mathcal{B}}^{\frac{1	}{n}} \ = \ \big\|\big(uC_{\varphi}\big)^n\big(C_{\varphi_n}\big)^{-1}\big\|_{\mathcal{B} \rightarrow \mathcal{B}}^{\frac{1}{n}} \\
&\leq \ \big\|\big(uC_{\varphi}\big)^n\big\|_{\mathcal{B} \rightarrow \mathcal{B}}^{\frac{1}{n}}\big\|\big(C_{\varphi^{-1}}\big)^n\big\|_{\mathcal{B} \rightarrow \mathcal{B}}^{\frac{1}{n}}.
\end{align*}
Letting $n$ tend to infinity and observing that $r_{\mathcal{B}}\big(C_{\varphi^{-1}}\big) = 1$ (by Lemma \ref{9} since $\varphi^{-1} \in \textnormal{Aut}(\mathbb{D})$) we see that
\begin{equation*}
\max\{|u(a)|,|u(b)|\} \ \leq \ r_{\mathcal{B}}(uC_{\varphi}),
\end{equation*}  
and thus
\begin{equation*}
r_{\mathcal{B}}(uC_{\varphi}) \ = \ \max\{|u(a)|,|u(b)|\}.
\end{equation*}
Applying the above result to the inverse operator $\big(uC_{\varphi}\big)^{-1}  = \frac{1}{u\circ\varphi^{-1}}C_{\varphi^{-1}}$, where $\varphi^{-1}$ is a hyperbolic automorphism with attractive fixed point $b$ and repulsive fixed point $a$, we get that
\begin{align*}
r_{\mathcal{B}}\big((uC_{\varphi})^{-1}\big) \ &= \ \max\Big\{\big|u\big(\varphi^{-1}(a)\big)\big|^{-1},\big|u\big(\varphi^{-1}(b)\big)\big|^{-1}\Big\} \\
&= \ \dfrac{1}{\min\big\{|u(a)|,|u(b)|\big\}}.
\end{align*}
Now if $\lambda \in \sigma_{\mathcal{B}}(uC_{\varphi})$, then we also have that $\lambda^{-1} \in \sigma_{\mathcal{B}}\big((uC_{\varphi})^{-1}\big)$, so that
\begin{equation*}
|\lambda| \ \leq \ r_{\mathcal{B}}(uC_{\varphi}) \ = \ \max\big\{|u(a)|,|u(b)|\big\}
\end{equation*}
and
\begin{equation*}
|\lambda|^{-1} \ \leq \ r_{\mathcal{B}}\big((uC_{\varphi})^{-1}\big) \ = \ \dfrac{1}{\min\big\{|u(a)|,|u(b)|\big\}},
\end{equation*}
which shows that the spectrum is contained in the claimed annulus.
\end{proof}

In the case when $|u(a)| = |u(b)|$ we are able to improve the previous theorem, to give a complete description of the spectrum. However, we were not able to compute the spectrum when $|u(a)| \neq |u(b)|$. It seems like one needs to consider the cases $|u(a)| < |u(b)|$ and $|u(a)| > |u(b)|$ separately, see for example \cite[Theorem 4.9]{2}.

\begin{thm}
Suppose that the weighted composition operator $uC_{\varphi} : \mathcal{B} \rightarrow \mathcal{B}$ is invertible on the Bloch space and assume that the automorphism $\varphi$ is hyperbolic, with attractive fixed point $a \in \partial \mathbb{D}$ and repulsive fixed point $b \in \partial\mathbb{D}$. If $u \in A(\mathbb{D})$ and $|u(a)|=|u(b)|$, then
\begin{equation*}
\sigma_{\mathcal{B}}(uC_{\varphi}) \ = \ \big\{\lambda \in \mathbb{C} :  |\lambda| = |u(a)|\big\}.
\end{equation*} 
\end{thm}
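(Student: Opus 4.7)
The plan is to combine the annulus inclusion already provided by Theorem \ref{17} with a reverse-inclusion argument patterned on the second half of the proof of Theorem \ref{13}. Since $|u(a)|=|u(b)|$, the annulus in Theorem \ref{17} collapses to the single circle $\{\lambda\in\mathbb C:|\lambda|=|u(a)|\}$, which immediately yields the inclusion $\sigma_{\mathcal{B}}(uC_\varphi)\subseteq\{\lambda:|\lambda|=|u(a)|\}$ and the identity $r_{\mathcal{B}}(uC_\varphi)=|u(a)|$. For the reverse inclusion, fix $\lambda\in\mathbb C$ with $|\lambda|=|u(a)|$. As noted in the parabolic proof, it is enough by the Spectral Mapping Theorem to show
\[
r_{\mathcal{B}}(\lambda-uC_\varphi)\ \geq\ 2\,r_{\mathcal{B}}(uC_\varphi),
\]
for this forces $-\lambda\in\sigma_{\mathcal{B}}(uC_\varphi)$; since $\lambda$ ranges freely over the circle, this gives the whole circle.

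To obtain the lower bound I would rerun the hyperbolic analogue of the argument used for Theorem \ref{13}. The orbit $z_n:=\varphi_n(0)$ of $0$ under a hyperbolic automorphism is also interpolating for $H^\infty$, so by the Open Mapping Theorem one may select an $H^\infty$-bounded sequence $\{f_n\}$ satisfying (\ref{35}). With $s>0$ chosen as in the parabolic proof and $\omega:=u/(\varphi')^s$, the same sequence of estimates (binomial expansion of $(\lambda-uC_\varphi)^{2n}f_n$, survival of only the middle term via (\ref{35}), and the inclusion $\mathcal{B}\subset H^\infty_{v_s}$ evaluated at $z_n$) produces
\[
\|(\lambda-uC_\varphi)^{2n}\|_{\mathcal{B}\to\mathcal{B}}\ \geq\ (cc_s)^{-1}\binom{2n}{n}|\lambda|^n|\omega_{(n)}(z_n)|\bigl(1-|\varphi_{2n}(0)|\bigr)^s.
\]
Taking $2n$-th roots and passing to the limit will deliver the required inequality provided the hyperbolic analogue of the asymptotic $|\omega_{(n)}(z_n)|^{1/(2n)}\to|u(a)|^{1/2}/\varphi'(a)^{s/2}$ holds.

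Verifying this asymptotic is the one genuinely new step, and it is the main obstacle. The factor $|u_{(n)}(z_n)|^{1/(2n)}$ telescopes as $|\prod_{k=n}^{2n-1}u(\varphi_k(0))|^{1/(2n)}$; since $\varphi_k(0)\to a$ and $u\in A(\mathbb{D})$ each factor tends to $|u(a)|$, giving $|u(a)|^{1/2}$ in the limit. For the derivative factor I would use the automorphism identity $|\varphi'_n(0)|=1-|\varphi_n(0)|^2$ together with the hyperbolic case of (\ref{11}) to conclude $|\varphi'_n(0)|^{1/n}\to\varphi'(a)$, and then the chain rule $\varphi'_n(z_n)=\varphi'_{2n}(0)/\varphi'_n(0)$ to obtain $|\varphi'_n(z_n)|^{s/(2n)}\to\varphi'(a)^{s/2}$. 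Combining these with the standard limits $\binom{2n}{n}^{1/(2n)}\to 2$ and the hyperbolic case $(1-|\varphi_{2n}(0)|)^{s/(2n)}\to\varphi'(a)^{s/2}$, the $\varphi'(a)^{s/2}$ factors cancel and one gets $r_{\mathcal{B}}(\lambda-uC_\varphi)\geq 2|\lambda|^{1/2}|u(a)|^{1/2}=2|u(a)|=2r_{\mathcal{B}}(uC_\varphi)$, completing the proof.
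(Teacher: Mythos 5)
Your overall strategy coincides with the paper's: the annulus of Theorem \ref{17} collapses to a circle when $|u(a)|=|u(b)|$, and the reverse inclusion is obtained by rerunning the interpolating-sequence argument of the parabolic case along the orbit $z_n=\varphi_n(0)$ (which is indeed interpolating for $H^\infty$ in the hyperbolic case) to show $r_{\mathcal{B}}(\lambda-uC_\varphi)\geq 2r_{\mathcal{B}}(uC_\varphi)$. Your lower estimate for $\|(\lambda-uC_\varphi)^{2n}\|_{\mathcal{B}\to\mathcal{B}}$ and your asymptotics for $|u_{(n)}(z_n)|^{1/(2n)}$ and $|\varphi_n'(z_n)|^{s/(2n)}$ are all correct.

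There is, however, a genuine error in the final limit. By (\ref{11}) one has $(1-|\varphi_{2n}(0)|)^{1/(2n)}\to\varphi'(a)$, hence $(1-|\varphi_{2n}(0)|)^{s/(2n)}\to\varphi'(a)^{s}$, not $\varphi'(a)^{s/2}$ as you assert. The powers of $\varphi'(a)$ therefore do not cancel: the limit of the $2n$-th roots is
\[
2\,|\lambda|^{\frac{1}{2}}\,\frac{|u(a)|^{\frac{1}{2}}}{\varphi'(a)^{\frac{s}{2}}}\,\varphi'(a)^{s}\ =\ 2\,|u(a)|\,\varphi'(a)^{\frac{s}{2}},
\]
which is strictly smaller than $2|u(a)|$ for every fixed $s>0$, since $0<\varphi'(a)<1$ for a hyperbolic automorphism. (This is exactly where the hyperbolic case departs from the parabolic one, where $\varphi'(a)=1$ and the issue disappears.) So for fixed $s$ you only get $r_{\mathcal{B}}(\lambda-uC_\varphi)\geq 2r_{\mathcal{B}}(uC_\varphi)\varphi'(a)^{s/2}$, which by itself is not enough. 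The missing step --- and the one the paper supplies --- is to note that $s>0$ was arbitrary while the left-hand side is independent of $s$, so letting $s\to 0^{+}$ (where $\varphi'(a)^{s/2}\to 1$) yields $r_{\mathcal{B}}(\lambda-uC_\varphi)\geq 2r_{\mathcal{B}}(uC_\varphi)$. With that one addition your argument is complete.
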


\begin{proof}
By Theorem \ref{17} it suffices to prove that
\begin{equation*}
\sigma_{\mathcal{B}}(uC_{\varphi}) \ \supseteq \ \big\{\lambda \in \mathbb{C} :  |\lambda| = |u(a)|\big\},
\end{equation*}
so let $\lambda$ be a complex number of modulus $|\lambda| = |u(a)| = r_{\mathcal{B}}(uC_{\varphi})$. As in the proof of Theorem \ref{13} it is then enough to show that $r_{\mathcal{B}}(\lambda - uC_{\varphi}) \geq 2r_{\mathcal{B}}(uC_{\varphi})$. This can be done exactly as in the previously mentioned proof since the sequence $\{z_n\}_{n=0}^{\infty} \subset \mathbb{D}$, defined by $z_n = \varphi_n(0)$, is interpolating for $H^{\infty}$ by \cite[Theorem 2.65]{4}. Using the same notation and performing the same calculations as in the proof of Theorem \ref{13}, we obtain the estimate
\begin{equation*}
\|(\lambda - uC_{\varphi})^{2n}\|_{\mathcal{B} \rightarrow \mathcal{B}} \ \geq \ (c c_s)^{-1} \binom{2n}{n}|\lambda|^{n}|\omega_{(n)}(z_n)|\big(1-|\varphi_{2n}(0)|\big)^s.
\end{equation*} 
Now since
\begin{equation*}
\lim_{n \rightarrow \infty}{|\omega_{(n)}(z_n)|^{\frac{1}{2n}}} \ = \ \dfrac{|u(a)|^{\frac{1}{2}}}{\varphi^{\prime}(a)^{\frac{s}{2}}},
\end{equation*}
as in \cite[Theorem 4.3]{2}, this gives that
\begin{align*}
r_{\mathcal{B}}(\lambda - uC_{\varphi}) \ &\geq \ \lim_{n \rightarrow \infty}(c c_s)^{-\frac{1}{2n}} \binom{2n}{n}^{\frac{1}{2n}}|\lambda|^{\frac{1}{2}}|\omega_{(n)}(z_n)|^{\frac{1}{2n}}\big(1-|\varphi_{2n}(0)|\big)^{\frac{s}{2n}}\\
&= \ 2|\lambda|^{\frac{1}{2}}\dfrac{|u(a)|^{\frac{1}{2}}}{\varphi^{\prime}(a)^{\frac{s}{2}}}\varphi^{\prime}(a)^{s} \ = \ 2|u(a)|\varphi^{\prime}(a)^{\frac{s}{2}}  \ = \ 2r_{\mathcal{B}}(uC_{\varphi})\varphi^{\prime}(a)^{\frac{s}{2}},
\end{align*}  
where we used the limit (\ref{11}) with $0 < \varphi^{\prime}(a) < 1$. The inequality 
\begin{equation*}
r_{\mathcal{B}}(\lambda - uC_{\varphi}) \ \geq \ 2r_{\mathcal{B}}(uC_{\varphi})\varphi^{\prime}(a)^{\frac{s}{2}}
\end{equation*}
holds for every $s > 0$ (see the latter part of the proof of Theorem \ref{13}) so we may take limits as $s \rightarrow 0$ to obtain $r_{\mathcal{B}}(\lambda - uC_{\varphi}) \geq 2r_{\mathcal{B}}(uC_{\varphi})$.  
\end{proof}

\subsection{The elliptic case}

We now turn to the spectrum of invertible weighted composition operators
$uC_\varphi$ on the Bloch space when $\varphi$ is an elliptic automorphism of
$\mathbb{D}$.  Though the methods of proof here are standard, some
minor modifications are necessary and we thus present them.

\begin{lem}\label{18}
If $u \in \mathcal{M}(\mathcal{B})$ and $\frac{1}{u} \in H^{\infty}$, then $\frac{1}{u} \in \mathcal{M}(\mathcal{B})$.
\end{lem}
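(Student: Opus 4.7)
The plan is to verify that $1/u$ satisfies the two characterizing conditions (\ref{26}) and (\ref{27}) for membership in $\mathcal{M}(\mathcal{B})$. Condition (\ref{27}), that $1/u \in H^{\infty}$, is given by hypothesis, so only (\ref{26}) requires any work.

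The key observation is that the derivative of $1/u$ is controlled pointwise by the derivative of $u$ together with $\|1/u\|_{\infty}$. Explicitly, since $u$ is nonvanishing (being bounded away from zero by virtue of $1/u \in H^{\infty}$), one has
\[
\left(\frac{1}{u}\right)^{\!\prime}(z) \ = \ -\frac{u^{\prime}(z)}{u(z)^2}, \qquad z \in \mathbb{D},
\]
and therefore
\[
\left|\left(\frac{1}{u}\right)^{\!\prime}(z)\right| \ \leq \ \Big\|\frac{1}{u}\Big\|_{\infty}^{2} \, |u^{\prime}(z)|.
\]
Multiplying both sides by $(1-|z|^2)\log\frac{e}{1-|z|^2}$ and taking the supremum over $z \in \mathbb{D}$, the right-hand side is finite because $u \in \mathcal{M}(\mathcal{B})$ already satisfies (\ref{26}). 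Hence $1/u$ also satisfies (\ref{26}), and combined with $1/u \in H^{\infty}$ this shows $1/u \in \mathcal{M}(\mathcal{B})$.

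There is no serious obstacle here: the statement is essentially a one-line algebraic manipulation combined with the explicit description of $\mathcal{M}(\mathcal{B})$ via (\ref{26})--(\ref{27}) recalled at the start of Section~\ref{28}. The only mild point to mention in the write-up is that the hypothesis $1/u \in H^{\infty}$ ensures $u$ is bounded away from zero, so the quotient rule applies on all of $\mathbb{D}$ without division-by-zero issues.
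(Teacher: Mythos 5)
Your proof is correct and follows exactly the same route as the paper's: verify condition (\ref{27}) from the hypothesis, then bound $(1-|z|^2)\left|\frac{u'(z)}{u(z)^2}\right|\log\frac{e}{1-|z|^2}$ by $\left\Vert\frac{1}{u}\right\Vert_{\infty}^2$ times the corresponding supremum for $u$, which is finite since $u \in \mathcal{M}(\mathcal{B})$. Nothing to add.
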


\begin{proof}
The function $f(z) := \frac{1}{u(z)}$ is bounded by assumption, so we only need to show that it satisfies condition (\ref{26}):
\begin{align*}
\sup_{z \in \mathbb{D}}\,(1-|z|^2)|f^{\prime}(z)|\log\frac{e}{1-|z|^2} \ &= \ \sup_{z \in \mathbb{D}}\,(1-|z|^2)\bigg|\frac{u^{\prime}(z)}{u(z)^2}\bigg|\log\frac{e}{1-|z|^2} \\
&\leq \ \Big\|\frac{1}{u}\Big\|_{\infty}^2\sup_{z \in \mathbb{D}}\,(1-|z|^2)|u^{\prime}(z)|\log\frac{e}{1-|z|^2},
\end{align*}
which is finite since $u \in \mathcal{M}(\mathcal{B})$.
\end{proof}

\begin{thm}\label{32}
Suppose that the weighted composition operator $uC_{\varphi}: \mathcal{B} \rightarrow \mathcal{B}$ is bounded on the Bloch space and assume that $u \in A(\mathbb{D})$ and that $\varphi$ is an elliptic automorphism, with unique fixed point $a \in \mathbb{D}$. Then 
\begin{enumerate}
\item[1)] either there is a positive integer $j$ such that $\varphi_j(z) = z$ for all $z \in \mathbb{D}$, in which case, if $m$ is the smallest such integer, then
\begin{equation*}
\sigma_{\mathcal{B}}(uC_{\varphi}) \ = \ \overline{\big\{\lambda \in \mathbb{C} : \lambda^m  =  u_{(m)}(z) \text{ for some  } z \in \mathbb{D}\big\}},
\end{equation*} 
\item[2)] or $\varphi_n \neq$ \textnormal{Id} for every $n \in \mathbb{N}$, in which case, if $uC_{\varphi}: \mathcal{B} \rightarrow \mathcal{B}$ is invertible, then
\begin{equation*}
\sigma_{\mathcal{B}}(uC_{\varphi}) \ = \ \big\{\lambda \in \mathbb{C} : |\lambda| = |u(a)|\big\}.
\end{equation*} 
\end{enumerate} 
\end{thm}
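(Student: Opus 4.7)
The two cases call for quite different techniques, and I address them separately.

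\emph{Case 1).} The finite-order hypothesis $\varphi_m = \textnormal{Id}$ yields the pivotal identity $(uC_{\varphi})^m = M_{u_{(m)}}$, so the $m$-th power is a pure multiplication operator and boundedness of $uC_{\varphi}$ on $\mathcal{B}$ forces $u_{(m)} \in \mathcal{M}(\mathcal{B})$. The plan is first to compute $\sigma_{\mathcal{B}}(M_{u_{(m)}}) = \overline{u_{(m)}(\mathbb{D})}$ and then to invoke the polynomial Spectral Mapping Theorem with $p(\lambda) = \lambda^m$. For $\overline{u_{(m)}(\mathbb{D})} \subseteq \sigma_{\mathcal{B}}(M_{u_{(m)}})$ one uses point-evaluation functionals: if $\lambda = u_{(m)}(z_0)$, the range of $M_{u_{(m)}-\lambda}$ sits in the proper closed subspace $\{f\in\mathcal{B} : f(z_0)=0\}$. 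For the converse, $\lambda \notin \overline{u_{(m)}(\mathbb{D})}$ makes $u_{(m)} - \lambda$ bounded away from zero on $\mathbb{D}$, whence $1/(u_{(m)}-\lambda)\in H^{\infty}$, and Lemma~\ref{18} lifts this to $\mathcal{M}(\mathcal{B})$, which inverts $M_{u_{(m)}-\lambda}$. Finally, one checks that the $\lambda \mapsto \lambda^m$-preimage of $\overline{u_{(m)}(\mathbb{D})}$ equals the closure described in the statement.

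\emph{Case 2).} I would first conjugate by the composition operator $C_{\alpha}$ for a M\"obius transformation $\alpha$ sending $a$ to $0$; since $C_{\alpha}:\mathcal{B}\to\mathcal{B}$ is bounded and invertible, this is a genuine similarity and reduces matters to $\varphi = \rho_{\theta}$ with $\theta/\pi$ irrational. The key technical step is an elliptic analogue of Lemmas~\ref{21} and~\ref{22}:
\[
\lim_{n\to\infty}\|u_{(n)}\|_{\infty}^{1/n} = |u(a)|.
\]
In the reduced model this becomes the uniform statement $\tfrac{1}{n}\sum_{j=0}^{n-1}\log|u(e^{ij\theta}z)| \to \log|u(0)|$ for $z\in\overline{\mathbb{D}}$, which follows from Weyl equidistribution (since $\theta/\pi$ is irrational) combined with the mean value property of the harmonic function $\log|u|$ (harmonic because $u$ is zero-free on $\mathbb{D}$). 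Granted this lemma, the operator-norm estimates from the proofs of Theorems~\ref{13} and~\ref{17} carry over verbatim to give $r_{\mathcal{B}}(uC_{\varphi}) \leq |u(a)|$, and the same bound applied to the inverse operator (via Theorem~\ref{3}) yields $r_{\mathcal{B}}((uC_{\varphi})^{-1}) \leq |u(a)|^{-1}$, forcing $\sigma_{\mathcal{B}}(uC_{\varphi}) \subseteq \{\lambda : |\lambda|=|u(a)|\}$.

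The main obstacle lies in the reverse inclusion in Case 2). The interpolating-sequence argument of Theorem~\ref{13} is not available, since every $\varphi$-orbit is confined to a hyperbolic circle and is therefore not interpolating for $H^{\infty}$. Instead I would exploit that $\{e^{ik\theta}\}_{k\geq 0}$ is already dense in $\partial\mathbb{D}$ and that monomials $z^k$ are exact eigenvectors of $C_{\rho_{\theta}}$ for these eigenvalues; the plan is to promote them into approximate eigenvectors of $uC_{\rho_{\theta}}$ with approximate eigenvalues $u(a)e^{ik\theta}$, making essential use of the fact that $u(z) - u(0)$ vanishes at the fixed point $0$. The similarities $C_{\rho_{-\mu}}^{-1}(uC_{\rho_{\theta}})C_{\rho_{-\mu}} = (u\circ\rho_{\mu})C_{\rho_{\theta}}$, which leave $u(a)$ invariant, should also come into play. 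Controlling the Bloch norms in this perturbation argument so that the proposed approximate eigenvalues really lie in the approximate point spectrum is the most delicate step; once a dense subset of $\{|\lambda|=|u(a)|\}$ is located inside $\sigma_{\mathcal{B}}(uC_{\varphi})$, closedness closes the gap.
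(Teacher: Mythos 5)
Your ``$\subseteq$'' inclusions and the spectral--radius computation track the paper closely: the paper's only genuinely new ingredient in this theorem is the estimate $r_{\mathcal{B}}(uC_{\varphi}) \leq \limsup_{n}\|u_{(n)}\|_{\infty}^{1/n}$ obtained from the bound on $\|M_{u_{(n)}}\|_{\mathcal{B}\to\mathcal{B}}$ in the proof of Theorem~\ref{13}, which you invoke correctly, and your equidistribution/mean--value argument for $\lim_{n}\|u_{(n)}\|_{\infty}^{1/n}=|u(a)|$ is exactly the content of the cited Lemma~4.13 of \cite{2}; everything else the paper imports from \cite[Theorems 4.11 and 4.14]{2}. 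The genuine gaps are in both reverse inclusions. In 1), the Spectral Mapping Theorem gives $\{\lambda^{m}:\lambda\in\sigma_{\mathcal{B}}(uC_{\varphi})\}=\sigma_{\mathcal{B}}(M_{u_{(m)}})=\overline{u_{(m)}(\mathbb{D})}$, which yields only $\sigma_{\mathcal{B}}(uC_{\varphi})\subseteq\{\lambda:\lambda^{m}\in\overline{u_{(m)}(\mathbb{D})}\}$; it does \emph{not} place every $m$-th root of every value $u_{(m)}(z_{0})$ in the spectrum (for $T=\mathrm{Id}$ and $m=2$ the preimage of $\sigma(T^{2})$ is $\{1,-1\}\neq\sigma(T)$). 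The missing step, which is where the imported proof of \cite[Theorem 4.11]{2} does its real work, is that for $z_{0}\neq a$ the evaluations $\delta_{z_{0}},\delta_{\varphi(z_{0})},\dots,\delta_{\varphi_{m-1}(z_{0})}$ span a $(uC_{\varphi})^{*}$-invariant subspace of $\mathcal{B}^{*}$ on which the adjoint acts as a weighted cyclic permutation with characteristic polynomial $\lambda^{m}-u_{(m)}(z_{0})$, so that \emph{all} $m$-th roots are eigenvalues of the adjoint and hence lie in $\sigma_{\mathcal{B}}(uC_{\varphi})$.

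In 2) the reverse inclusion is only a plan, and the specific plan would not survive the norm estimates. With $\varphi(z)=\mu z$ one has $(uC_{\varphi}-u(0)\mu^{k})z^{k}=\mu^{k}\bigl(u(z)-u(0)\bigr)z^{k}$, and the Bloch seminorm of the right-hand side contains the term $\sup_{z}(1-|z|^{2})\,k|z|^{k-1}|u(z)-u(0)|$; the supremum of $(1-|z|^{2})k|z|^{k-1}$ is attained at radii tending to $1$, where $|u(z)-u(0)|$ is of fixed positive size unless $u$ is constant, so this quantity does not tend to $0$ while $\|z^{k}\|_{\mathcal{B}}$ stays comparable to $1$. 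Thus the monomials are not approximate eigenvectors, and conjugating by rotations does not help since those similarities leave the value at the fixed point unchanged. The argument that works --- and the one the paper is importing from \cite[Theorem 4.14]{2} --- again goes through the adjoint: the functionals $f\mapsto f^{(j)}(0)$, $0\leq j\leq k$, span a finite-dimensional $(uC_{\varphi})^{*}$-invariant subspace on which the adjoint is triangular with diagonal entries $u(0),u(0)\mu,\dots,u(0)\mu^{k}$, so each $u(0)\mu^{k}$ is an exact eigenvalue of $(uC_{\varphi})^{*}$ and hence lies in $\sigma_{\mathcal{B}}(uC_{\varphi})$; density of $\{\mu^{k}\}_{k\geq0}$ in $\partial\mathbb{D}$ and closedness of the spectrum give the full circle. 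In short: your upper bounds and your use of Lemma~\ref{18} are correct and consistent with the paper, but both lower bounds require the adjoint-eigenvalue mechanism rather than spectral mapping or approximate eigenvectors.
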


\begin{proof}
The proof of 1) is identical to the proof of \cite[Theorem 4.11]{2}, because as noted in \cite[Section 3.1]{3} we can use the result of Lemma \ref{18} to prove that
\begin{equation*}
\sigma_{\mathcal{B}}(uC_{\varphi}) \ \subseteq \ \overline{\big\{\lambda \in \mathbb{C} : \lambda^m  =  u_{(m)}(z) \text{ for some  } z \in \mathbb{D}\big\}}.
\end{equation*}
The proof of 2) goes as in \cite[Theorem 4.14]{2} and it relies on \cite[Lemma 4.13]{2}, which is also true for the Bloch space after a minor modification in the proof. Namely, it is assumed that $\varphi(z) = \mu z$, where $\mu = e^{2\pi\theta i}$ and $\theta$ is irrational, $u \in A(\mathbb{D})$ and $uC_{\varphi}$ is invertible, and proven that $r_{\mathcal{A}}\big(uC_{\varphi}\big) = |u(0)|$ by methods also valid for the Bloch space, except the one showing that
\begin{equation}\label{19}
r_{\mathcal{A}}\big(uC_{\varphi}\big) \ \leq \ \limsup_{n \rightarrow \infty}{\|u_{(n)}\|_{\infty}^{\frac{1}{n}}}.
\end{equation} 
Here $\mathcal{A}$ stands for a space satisfying conditions (C1), (C2) and (C3) as defined in \cite[Section 2.2]{2}. However, by the same calculations used to prove Theorem \ref{13} we get that
\begin{align*}
\|M_{u_{(n)}}\|_{\mathcal{B} \rightarrow \mathcal{B}}^{\frac{1}{n}} \ \leq \ \left[\|M_{u^{\prime}}\|_{\mathcal{B} \rightarrow H_{v_1}^{\infty}}\Big\|\frac{1}{u}\Big\|_{\infty} + 2\right]^{\frac{1}{n}}n^{\frac{1}{n}}\big(1+\varrho(\psi(0),0)n\big)^{\frac{1}{n}}\|u_{(n)}\|_{\infty}^{\frac{1}{n}},
\end{align*}
and from this follows that
\begin{align*}
r_{\mathcal{B}}\big(uC_{\varphi}\big) \ \leq \ r_{\mathcal{B}}\big(C_{\varphi}\big)\limsup_{n \rightarrow \infty}{\|M_{u_{(n)}}\|_{\mathcal{B} \rightarrow \mathcal{B}}^{\frac{1}{n}}} \ \leq \ \limsup_{n \rightarrow \infty}{\|u_{(n)}\|_{\infty}^{\frac{1}{n}}},
\end{align*}
where we used that $r_{\mathcal{B}}\big(C_{\varphi}\big) = 1$ by Lemma \ref{9}. Thus (\ref{19}) also holds for the Bloch space, and we can use the proof of \cite[Theorem 4.14]{2} to obtain 2). 
\end{proof}

\section{Spectra on the Dirichlet space}\label{30}

The spectra of invertible weighted composition operators induced by para-bolic and elliptic automorphisms on the Dirichlet space were completely described in \cite{3}. In the hyperbolic case and under essentially the same assumptions as in Theorem \ref{33} below, it was also shown in \cite[Theorem 3.3]{3} that $r_{\mathcal{D}}(uC_{\varphi}) \leq \max\{|u(a)|,|u(b)|\}\frac{1}{\mu}$ and
\[
\sigma_{\mathcal{D}}(uC_{\varphi}) \subseteq \bigl\{\lambda \in
\mathbb{C} : \min\{|u(a)|,|u(b)|\}\mu \leq |\lambda| \leq
\max\{|u(a)|,|u(b)|\}\tfrac{1}{\mu}\bigr\},
\]
where $\varphi$ is conjugate to the automorphism
\[
\psi(z) \ = \ \frac{(1 + \mu)z  + (1-\mu)}{(1 - \mu)z  + (1 + \mu)}
\]
for $0 < \mu < 1$. In Theorem \ref{33} we improve this result to obtain an exact expression for the spectral radius.

\begin{lem}\label{23}\cite[Theorem 7]{10}
If $\varphi$ is a univalent selfmap of $\mathbb{D}$, then the spectral radius $r_{\mathcal{D}}(C_{\varphi}) = 1$.
\end{lem}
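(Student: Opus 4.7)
The plan is to estimate the operator norm $\|C_{\varphi_n}\|_{\mathcal{D} \to \mathcal{D}}$ explicitly and show it grows at most polynomially in $n$. The univalence of $\varphi$ (and hence of every iterate $\varphi_n$) is the crucial ingredient: it enables the change-of-variables identity
\begin{equation*}
\int_{\mathbb D}|f'(\varphi_n(z))|^2|\varphi_n'(z)|^2\,dA(z) \ = \ \int_{\varphi_n(\mathbb D)}|f'(w)|^2\,dA(w) \ \leq \ \|f\|_{\mathcal{D}}^2
\end{equation*}
for any $f \in \mathcal{D}$, since $\varphi_n(\mathbb{D}) \subseteq \mathbb{D}$. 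Combined with the pointwise estimate from the Dirichlet reproducing kernel, $|f(w)|^2 \leq \bigl(1 + \log\tfrac{1}{1-|w|^2}\bigr)\|f\|_{\mathcal{D}}^2$, applied at $w = \varphi_n(0)$, this gives
\begin{equation*}
\|C_{\varphi_n}f\|_{\mathcal{D}}^2 \ = \ |f(\varphi_n(0))|^2 + \int_{\mathbb D}|f'(\varphi_n(z))|^2|\varphi_n'(z)|^2\,dA(z) \ \leq \ \Bigl(2 + \log\tfrac{1}{1-|\varphi_n(0)|^2}\Bigr)\|f\|_{\mathcal{D}}^2.
\end{equation*}

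Next I would control the growth of $|\varphi_n(0)|$ via the Schwarz--Pick inequality, exactly as in the proof of Lemma \ref{9}. Iterating Schwarz--Pick gives $\varrho(\varphi_k(0), \varphi_{k-1}(0)) \leq \varrho(\varphi(0), 0)$ for every $k \geq 1$, and then the triangle inequality for the hyperbolic distance yields $\varrho(\varphi_n(0), 0) \leq n\, \varrho(\varphi(0), 0)$. Converting back via $|z| = \tanh \varrho(z, 0)$ shows that $\log\tfrac{1}{1-|\varphi_n(0)|^2}$ is bounded by a linear function of $n$, and hence
\begin{equation*}
\|C_{\varphi_n}\|_{\mathcal{D} \to \mathcal{D}}^{1/n} \ \leq \ \Bigl(2 + \log\tfrac{1}{1-|\varphi_n(0)|^2}\Bigr)^{1/(2n)} \ \To \ 1 \quad (n \to \infty),
\end{equation*}
so $r_{\mathcal{D}}(C_\varphi) \leq 1$. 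The reverse inequality is immediate: $C_{\varphi}$ maps the constant function $1$ to itself, so $\|C_{\varphi_n}\|_{\mathcal{D} \to \mathcal{D}} \geq 1$ for every $n$, giving $r_{\mathcal{D}}(C_\varphi) \geq 1$.

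The main obstacle is really just remembering where univalence enters the argument: for a general analytic selfmap the change-of-variables identity picks up a counting function $n_{\varphi_n}(w) = \#\{z \in \mathbb D : \varphi_n(z) = w\}$, and the clean bound $\int_{\varphi_n(\mathbb D)}|f'(w)|^2\,dA(w) \leq \|f\|_{\mathcal{D}}^2$ no longer holds in that form; one would then have to control a Nevanlinna-type counting function instead. Once univalence is in place, the argument reduces to a quantitative Schwarz--Pick estimate parallel in structure to the Bloch-space Lemma \ref{9}, with the hyperbolic-distance bound controlling a different seminorm.
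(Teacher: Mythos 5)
Your argument is correct, and it is essentially the proof behind the result the paper simply cites: the lemma is quoted from Mart\'{i}n--Vukoti\'{c} \cite[Theorem 7]{10}, whose norm bound $\|C_{\varphi}\|_{\mathcal{D}\to\mathcal{D}}\le \sqrt{2}\big(1+\varrho(\varphi(0),0)\big)^{1/2}$ for univalent $\varphi$ is obtained exactly by your univalent change of variables plus the reproducing-kernel point evaluation, and the spectral radius then follows from the Schwarz--Pick iteration $\varrho(\varphi_n(0),0)\le n\,\varrho(\varphi(0),0)$ as you describe. All steps check out, including the lower bound via the constant function.
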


\begin{thm}\label{33}
Suppose that the weighted composition operator $uC_{\varphi} : \mathcal{D} \rightarrow \mathcal{D}$ is invertible on the Dirichlet space and assume that the automorphism $\varphi$ is hyperbolic, with attractive fixed point $a \in \partial \mathbb{D}$ and repulsive fixed point $b \in \partial\mathbb{D}$. If $u \in A(\mathbb{D})$, then $r_{\mathcal{D}}(uC_{\varphi}) \ = \ \max\{|u(a)|,|u(b)|\}$ and
\begin{equation*}
\sigma_{\mathcal{D}}(uC_{\varphi}) \ \subseteq \ \big\{\lambda \in \mathbb{C} :  \min\{|u(a)|,|u(b)|\} \leq |\lambda| \leq \max\{|u(a)|,|u(b)|\}\big\}.
\end{equation*} 
\end{thm}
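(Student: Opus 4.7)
The plan is to transplant the Bloch-space argument of Theorem~\ref{17} to the Dirichlet setting. Two substitutions drive the adaptation: the multiplication operator $M_{u'}:\mathcal{B}\to H^\infty_{v_1}$ is replaced by $M_{u'}:\mathcal{D}\to A^2$, which is bounded by Stegenga's characterisation of $\mathcal{M}(\mathcal{D})$ together with Theorem~\ref{3} (since invertibility forces $u\in\mathcal{M}(\mathcal{D})$), and Lemma~\ref{9} is replaced by Lemma~\ref{23}. The ingredients Lemma~\ref{22}, the inverse formula of Theorem~\ref{3}, and the factorisation $(uC_\varphi)^n=M_{u_{(n)}}C_{\varphi_n}$ remain the same.

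The first step is to bound $\|M_{u_{(n)}}\|_{\mathcal{D}\to\mathcal{D}}$. For $\|f\|_{\mathcal{D}}\leq 1$, split $(u_{(n)}f)'=u_{(n)}'f+u_{(n)}f'$ inside the Dirichlet integral, and estimate $\|u_{(n)}f'\|_{A^2}\leq\|u_{(n)}\|_\infty\|f\|_{\mathcal{D}}$ together with $|u_{(n)}(0)f(0)|\leq\|u_{(n)}\|_\infty\|f\|_{\mathcal{D}}$. For the remaining term, decompose $u_{(n)}'=\sum_{j=0}^{n-1}\frac{u_{(n)}}{u\circ\varphi_j}(u'\circ\varphi_j)\,\varphi_j'$ and change variables via $w=\varphi_j(z)$, using $dA(w)=|\varphi_j'(z)|^2 dA(z)$, to identify
\[
\bigl\|(u'\circ\varphi_j)\,\varphi_j'\,f\bigr\|_{A^2}\ =\ \bigl\|M_{u'}(f\circ\psi_j)\bigr\|_{A^2}\ \leq\ \|M_{u'}\|_{\mathcal{D}\to A^2}\|C_{\psi_j}\|_{\mathcal{D}\to\mathcal{D}}\|f\|_{\mathcal{D}},
\]
where $\psi_j=\varphi_j^{-1}$. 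Summing and using $\|u_{(n)}/(u\circ\varphi_j)\|_\infty\leq\|u_{(n)}\|_\infty\|1/u\|_\infty$ yields
\[
\|M_{u_{(n)}}\|_{\mathcal{D}\to\mathcal{D}}\ \leq\ \|u_{(n)}\|_\infty\Bigl[2+\|M_{u'}\|_{\mathcal{D}\to A^2}\|1/u\|_\infty\sum_{j=0}^{n-1}\|C_{\psi_j}\|_{\mathcal{D}\to\mathcal{D}}\Bigr].
\]

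The upper bound on the spectral radius then follows by taking $n$-th roots. Since $r_{\mathcal{D}}(C_{\psi})=1$ by Lemma~\ref{23}, for every $\varepsilon>0$ one has $\|C_{\psi_j}\|_{\mathcal{D}\to\mathcal{D}}\leq(1+\varepsilon)^j$ for $j$ large, so $\bigl[\sum_{j=0}^{n-1}\|C_{\psi_j}\|_{\mathcal{D}\to\mathcal{D}}\bigr]^{1/n}\to 1$; combining with Lemma~\ref{22} gives $\limsup_n\|M_{u_{(n)}}\|_{\mathcal{D}\to\mathcal{D}}^{1/n}\leq\max\{|u(a)|,|u(b)|\}$, and then $r_{\mathcal{D}}(uC_\varphi)\leq\max\{|u(a)|,|u(b)|\}$ via the factorisation $(uC_\varphi)^n=M_{u_{(n)}}C_{\varphi_n}$ and $r_{\mathcal{D}}(C_\varphi)=1$. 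For the reverse inequality I would exploit the reproducing-kernel Hilbert space structure of $\mathcal{D}$: if $K_w$ is the kernel at $w$ and $v\in\mathcal{M}(\mathcal{D})$, then $M_v^{*}K_w=\overline{v(w)}K_w$, so $\|v\|_\infty\leq\|M_v\|_{\mathcal{D}\to\mathcal{D}}$. Applied to $v=u_{(n)}$ and combined with $M_{u_{(n)}}=(uC_\varphi)^n C_{\varphi^{-1}}^n$ and Lemmas~\ref{22}--\ref{23}, this gives $\max\{|u(a)|,|u(b)|\}\leq r_{\mathcal{D}}(uC_\varphi)$.

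Finally, for the annulus inclusion I would apply the spectral-radius formula just established to the inverse $(uC_\varphi)^{-1}=\frac{1}{u\circ\varphi^{-1}}C_{\varphi^{-1}}$, observing that $\varphi^{-1}$ is hyperbolic with the same fixed points $a,b$ but with their attractive/repulsive roles interchanged, so $(u\circ\varphi^{-1})(a)=u(a)$ and $(u\circ\varphi^{-1})(b)=u(b)$. This yields $r_{\mathcal{D}}((uC_\varphi)^{-1})=1/\min\{|u(a)|,|u(b)|\}$, and since $\lambda\in\sigma_{\mathcal{D}}(uC_\varphi)$ implies $\lambda^{-1}\in\sigma_{\mathcal{D}}((uC_\varphi)^{-1})$, the two spectral-radius bounds together give the claimed annulus. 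The principal obstacle, compared to the Bloch case, is that no explicit linear bound on $\|C_{\psi_j}\|_{\mathcal{D}\to\mathcal{D}}$ analogous to (\ref{12}) is available; one must instead argue subexponential growth indirectly from Lemma~\ref{23}, which is exactly what drives the $n$-th root limit.
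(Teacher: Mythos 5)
Your proposal is correct and follows the paper's proof almost step for step: the same splitting of $(u_{(n)}f)'$, the same decomposition of $u_{(n)}'$ with the change of variables $w=\varphi_j(z)$ identifying $\|M_{u'}(f\circ\psi_j)\|_{A^2}$, the same lower bound via $\|u_{(n)}\|_\infty\leq\|M_{u_{(n)}}\|_{\mathcal{D}\to\mathcal{D}}$ and $M_{u_{(n)}}=(uC_\varphi)^n(C_{\varphi_n})^{-1}$, and the same passage to the annulus via the inverse operator. The one point where you diverge is the step you flag as the ``principal obstacle'': the paper does have an explicit analogue of (\ref{12}), namely the Mart\'{i}n--Vukoti\'{c} bound $\|C_{\psi_j}\|_{\mathcal{D}\to\mathcal{D}}\leq\sqrt{2}\bigl(1+\varrho(\psi(0),0)j\bigr)^{1/2}$ from \cite[Theorem 7]{10} (the same source as Lemma \ref{23}), which makes the prefactor polynomially bounded and the $n$-th root limit immediate. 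Your soft substitute --- deducing $\|C_{\psi_j}\|_{\mathcal{D}\to\mathcal{D}}\leq(1+\eps)^j$ for large $j$ from $r_{\mathcal{D}}(C_\psi)=1$ and concluding $\bigl[\sum_{j=0}^{n-1}\|C_{\psi_j}\|_{\mathcal{D}\to\mathcal{D}}\bigr]^{1/n}\to 1$ --- is also valid and has the advantage of requiring only the spectral radius rather than a norm estimate, at the cost of an extra $\eps$-argument; likewise your reproducing-kernel derivation of $\|v\|_\infty\leq\|M_v\|_{\mathcal{D}\to\mathcal{D}}$ replaces the paper's citation of \cite[Lemma 1]{9} but proves the same inequality.
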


\begin{proof}
According to Theorem \ref{3}, $u$ belongs to $\mathcal{M}(\mathcal{D})$ and is bounded away from zero on $\mathbb{D}$, from which follows that $u(a),u(b) \neq 0$. Since $r_{\mathcal{D}}(C_{\varphi}) = 1$ by Lemma \ref{23}, we begin by estimating the operator norm of $M_{u_{(n)}} : \mathcal{D} \rightarrow \mathcal{D}$:
\begin{align*}
& \ \ \ \ \ \ \ \ \ \ \ \ \ \ \ \ \ \ \ \ \ \  \ \ \ \ \  \|M_{u_{(n)}}\|_{\mathcal{D} \rightarrow \mathcal{D}} \ = \ \sup_{\|f\|_{\mathcal{D}} \leq 1}{\|u_{(n)}\cdot f\|_{\mathcal{D}}} \\
&= \ \sup_{\|f\|_{\mathcal{D}}\leq1}\left[|u_{(n)}(0)f(0)|^2 \, + \, \int_{\mathbb{D}}{|(u_{(n)}\cdot f)^{\prime}(z)|^2dA(z)} \right]^{\frac{1}{2}} \\ 
&\leq \ \sup_{\|f\|_{\mathcal{D}}\leq1}\left[|u_{(n)}(0)f(0)| \, + \, \left(\int_{\mathbb{D}}{|u_{(n)}^{\prime}(z)f(z) \, + \, u_{(n)}(z)f^{\prime}(z)|^2dA(z)}\right)^{\frac{1}{2}}\right] \\ 
&\leq \ \|u_{(n)}\|_{\infty} \ + \ \sup_{\|f\|_{\mathcal{D}}\leq1}{\left(\int_{\mathbb{D}}{|u_{(n)}^{\prime}(z)f(z)|^2dA(z)}\right)^{\frac{1}{2}}} \ + \\
& \ \ \ \ \, \sup_{\|f\|_{\mathcal{D}}\leq1}{\left(\int_{\mathbb{D}}{|u_{(n)}(z)f^{\prime}(z)|^2dA(z)}\right)^{\frac{1}{2}}} \\
&\leq \ \sup_{\|f\|_{\mathcal{D}}\leq1}{\left(\int_{\mathbb{D}}{|u_{(n)}^{\prime}(z)f(z)|^2dA(z)}\right)^{\frac{1}{2}}} \ + \ 2\|u_{(n)}\|_{\infty},
\end{align*}
where we used the subadditivity of the square root function and the triangle inequality for the $L^2$-norm. Now since
\begin{equation*}
u_{(n)}^{\prime}(z) \ = \ \sum_{j=0}^{n-1}{\frac{u_{(n)}(z)}{u \circ \varphi_j(z)}\cdot (u \circ \varphi_j)^{\prime}(z)},
\end{equation*}
we can continue the above estimation as follows:
\begin{align*}
\|M_{u_{(n)}}\|&_{\mathcal{D} \rightarrow \mathcal{D}} \ \leq \ \sup_{\|f\|_{\mathcal{D}}\leq1}{\sum_{j=0}^{n-1}{\left(\int_{\mathbb{D}}{\Big|\frac{u_{(n)}(z)}{u \circ \varphi_j(z)}\cdot (u \circ \varphi_j)^{\prime}(z)\Big|^2|f(z)|^2dA(z)}\right)^{\frac{1}{2}}}} \\
& \ \ \ \ \ \ \ \ \ \ \ \, + \ 2\|u_{(n)}\|_{\infty} \\
&\leq \ \sum_{j=0}^{n-1}{\Big\|\frac{u_{(n)}}{u \circ \varphi_j}\Big\|_{\infty}}\sup_{\|f\|_{\mathcal{D}}\leq1}{\left(\int_{\mathbb{D}}{|u^{\prime}(\varphi_j(z))|^2|\varphi_j^{\prime}(z)|^2|f(z)|^2dA(z)}\right)^{\frac{1}{2}}} \\
& \ \ \ \ + \ 2\|u_{(n)}\|_{\infty}.
\end{align*}
After substituting $w = \varphi_j(z)$, the above integral takes the form
\begin{align*}
\left(\int_{\mathbb{D}}{|u^{\prime}(w)f(\psi_j(w))|^2dA(w)}\right)^{\frac{1}{2}} \ &= \ \|M_{u^{\prime}}(f \circ \psi_j)\|_{A^2} \\
&\leq \ \|M_{u^{\prime}}\|_{\mathcal{D} \rightarrow A^2}\|C_{\psi_j}\|_{\mathcal{D} \rightarrow \mathcal{D}}\|f\|_{\mathcal{D}},
\end{align*}
where $\psi := \varphi^{-1}$ and the norm $\|M_{u^{\prime}}\|_{\mathcal{D} \rightarrow A^2}$ is finite since $u \in \mathcal{M(\mathcal{D})}$, see the section on multiplier spaces. In \cite[Theorem 7]{10}, Mart\'{i}n and Vukoti\'{c} proved that
\begin{equation*}
\|C_{\psi_j}\|_{\mathcal{D} \rightarrow \mathcal{D}} \ \leq \ \sqrt{2}\big(1 + \varrho(\psi(0),0)j\big)^{\frac{1}{2}}
\end{equation*}
for every $j \in \mathbb{N}$, which combined with the results above gives
\begin{align*}
\|M_{u_{(n)}}\|_{\mathcal{D} \rightarrow \mathcal{D}} \ &\leq \ \sum_{j=0}^{n-1}{\Big\|\frac{u_{(n)}}{u \circ \varphi_j}\Big\|_{\infty}}\|M_{u^{\prime}}\|_{\mathcal{D} \rightarrow A^2}\|C_{\psi_j}\|_{\mathcal{D} \rightarrow \mathcal{D}} \ + \ 2\|u_{(n)}\|_{\infty} \\
&\leq \ \left[\sqrt{2}\|M_{u^{\prime}}\|_{\mathcal{D} \rightarrow A^2}\Big\|\frac{1}{u}\Big\|_{\infty} + 2\right]n\big(1+\varrho(\psi(0),0)n\big)^{\frac{1}{2}}\|u_{(n)}\|_{\infty}.
\end{align*}
Applying this to the spectral radius and using Lemmas \ref{22} and \ref{23} gives
\begin{align*}
&r_{\mathcal{D}}(uC_{\varphi}) \ = \ \lim_{n \rightarrow \infty}\|(uC_{\varphi})^n\|_{\mathcal{D} \rightarrow \mathcal{D}}^{\frac{1}{n}} \ \leq \ \limsup_{n \rightarrow \infty}\|M_{u_{(n)}}\|_{\mathcal{D} \rightarrow \mathcal{D}}^{\frac{1}{n}}\hspace*{0.3mm}r_{\mathcal{D}}(C_{\varphi}) \\
&\leq \ \lim_{n \rightarrow \infty}{\left[\sqrt{2}\|M_{u^{\prime}}\|_{\mathcal{D} \rightarrow A^2}\Big\|\frac{1}{u}\Big\|_{\infty} + 2\right]^{\frac{1}{n}}n^{\frac{1}{n}}\big(1+\varrho(\psi(0),0)n\big)^{\frac{1}{2n}}\|u_{(n)}\|_{\infty}^{\frac{1}{n}}} \\
&= \ \max\{|u(a)|,|u(b)|\}.
\end{align*}
On the other hand, by \cite[Lemma 1]{9}, we have that
\begin{align*}
\|u_{(n)}\|_{\infty}^{\frac{1}{n}} \ &\leq \ \|M_{u_{(n)}}\|_{\mathcal{D} \rightarrow \mathcal{D}}^{\frac{1}{n}} \ = \ \big\|\big(uC_{\varphi}\big)^n\big(C_{\varphi_n}\big)^{-1}\big\|_{\mathcal{D} \rightarrow \mathcal{D}}^{\frac{1}{n}} \\
&\leq \ \big\|\big(uC_{\varphi}\big)^n\big\|_{\mathcal{D} \rightarrow \mathcal{D}}^{\frac{1}{n}}\big\|\big(C_{\varphi^{-1}}\big)^n\big\|_{\mathcal{D} \rightarrow \mathcal{D}}^{\frac{1}{n}},
\end{align*}
so letting $n$ tend to infinity and observing that $r_{\mathcal{D}}\big(C_{\varphi^{-1}}\big) = 1$ (by Lemma \ref{23} since $\varphi^{-1} \in \textnormal{Aut}(\mathbb{D})$) we see that
\begin{equation*}
\max\{|u(a)|,|u(b)|\} \ \leq \ r_{\mathcal{D}}(uC_{\varphi}),
\end{equation*}  
and thus
\begin{equation*}
r_{\mathcal{D}}(uC_{\varphi}) \ = \ \max\{|u(a)|,|u(b)|\}.
\end{equation*}
The statement regarding the spectrum $\sigma_{\mathcal{D}}(uC_{\varphi})$ can now be justified exactly as in Theorem \ref{17}.
\end{proof}

\begin{rem}
As already noted, we were not able to compute the spectrum of invertible weighted composition operators with hyperbolic symbols $\varphi$, neither for the Bloch nor the Dirichlet space, except when $|u(a)| = |u(b)|$ in the Bloch case. However, the following conjecture seems plausible and we leave it as an open problem: 
\end{rem}

\noindent \textit{Suppose that the weighted composition operator} $uC_{\varphi} : \mathcal{X} \rightarrow \mathcal{X}$ \textit{is invertible on} $\mathcal{X}$\textit{, where} $\mathcal{X}$ \textit{is either the Bloch space} $\mathcal{B}$ \textit{or the Dirichlet space} $\mathcal{D}$\textit{, and assume that the automorphism} $\varphi$ \textit{is hyperbolic, with attractive fixed point} $a \in \partial \mathbb{D}$ \textit{and repulsive fixed point} $b \in \partial\mathbb{D}$\textit{. If} $u \in A(\mathbb{D})$\textit{, then}
\begin{equation*}
\sigma_{\mathcal{X}}(uC_{\varphi}) \ = \ \big\{\lambda \in \mathbb{C} :  \min\{|u(a)|,|u(b)|\} \leq |\lambda| \leq \max\{|u(a)|,|u(b)|\}\big\}.
\end{equation*}

\section*{Acknowledgements}
The first author is grateful for the financial support from the Doctoral Network in Information Technologies and Mathematics at \AA bo Akademi University.

\end{document}